\newtheorem{theorem}{Theorem}[section]
\newtheorem{corollary}[theorem]{Corollary}
\newtheorem{lemma}[theorem]{Lemma}
\newtheorem{proposition}[theorem]{Proposition}
\newtheorem{remark}[theorem]{Remark}
\def\J#1#2#3{ \left\{ #1,#2,#3 \right\} }
\def\11{\textbf{$1$}}
\def\11b#1{\mathbf{1}_{_{#1}}}
\def\CC{{\mathbb{C}}}
\begin{document}

\title[Preservers of triple transition pseudo-probabilities]{Preservers of triple transition pseudo-probabilities in connection with orthogonality preservers and surjective isometries}

\author[A.M. Peralta]{Antonio M. Peralta}

\address{Instituto de Matemáticas de la Universidad de Granada (IMAG). Departamento de An{\'a}lisis Matem{\'a}tico, Facultad de	Ciencias, Universidad de Granada, 18071 Granada, Spain.}

\email{aperalta@ugr.es}


\subjclass[2010]{Primary 81R15; 47B49  Secondary 17C65; 46L60; 47N50}

\keywords{Wigner theorem; minimal partial isometries; minimal tripotents; triple transition pseudo-probability; preservers; Cartan factors; surjective isometry; Tingley's type theorem}

\date{}

\begin{abstract} We prove that every bijection preserving triple transition pseudo-probabilities between the sets of minimal tripotents of two atomic JBW$^*$-triples automatically preserves orthogonality in both directions. Consequently, each bijection  preserving triple transition pseudo-probabilities between the sets of minimal tripotents of two atomic JBW$^*$-triples is precisely the restriction of a (complex-)linear triple isomorphism between the corresponding JBW$^*$-triples. This result can be regarded as triple version of the celebrated Wigner theorem for Wigner symmetries on the posets of minimal projections in $B(H)$. We also present a 
Tingley type theorem by proving that every surjective isometry between the sets of minimal tripotents in two atomic JBW$^*$-triples admits an extension to a real linear surjective isometry between these two JBW$^*$-triples. We also show that the class of surjective isometries between the sets of minimal tripotents in two atomic JBW$^*$-triples is, in general, strictly wider than the set of bijections preserving triple transition pseudo-probabilities. 
\end{abstract}

\maketitle
\thispagestyle{empty}

\section{Introduction}

As it is masterfully narrated by G. Chevalier in \cite{Chev2007}, the Geneva-Brussels School proposed the orthomodular lattice $\mathcal{P}(H)$, of all projections on a complex Hilbert space $H,$ with the partial order defined by $p\leq q$ in $\mathcal{P}(H)$ if $p q =p$ and orthogonality determined by zero product --equivalently, the orthomodular lattice $\mathbf{L}$ of all closed subspaces of $H$ with the partial ordering given by inclusion and orthogonality in the Euclidean sense-- as mathematical model in quantum mechanics.  By a result due to C. Piron, from 1976, every isomorphism of the propositional system of all closed subspaces of a complex Hilbert space of dimension at least 3 is induced by a unitary or by an antiunitary operator (see \cite[Corollary 13]{Chev2007}). This is actually an equivalent reformulation of the celebrated Wigner's unitary-antiunitary theorem (cf. \cite{CasdeVilahtiLevrero97}).\smallskip

The elements in $\mathcal{P}(H)$ are precisely the positive partial isometries in $B(H)$. We recall that an operator $e$ in $B(H)$ is called a \emph{partial isometry} or a \emph{tripotent} if $e e^* e =e$ (equivalently, $ee^*$ or $e^* e$ lies in $\mathcal{P}(H)$). Along this note we shall write $PI(H)= \mathcal{U}(B(H))$ for the collection of all partial isometries in $B(H)$. In \cite{Molnar2002} L. Moln{\'a}r paved a new ground to establish an analogue to Piron's version of Wigner's unitary-antiunitary theorem for preservers of order and orthogonality between the corresponding structures of partial isometries. We should recall first that for $e,v\in \mathcal{U}(B(H))$ we write $e\leq v$ (respectively, $e$ is orthogonal to $v$, $e\perp v$ in short) if $ee^*\leq vv^*$ and $e^* e \leq v^* v$ (respectively, $e v^* = v^* e =0$). Moln{\'a}r's version of the Piron-Wigner theorem asserts that for each complex Hilbert space $H$ with dim$(H)\geq 3,$ every bijective transformation $\Phi : \mathcal{U}(B(H))\to \mathcal{U}(B(H))$ which preserves the partial ordering and orthogonality between partial isometries in both directions and is continuous {\rm(}in the operator norm{\rm)} at a single element of $\mathcal{U}(B(H))$ different from $0$, extends to a real-linear triple isomorphism (cf. \cite[Theorem 1]{Molnar2002}).\smallskip

The Banach space $B(H)$, of all bounded linear operators on a complex Hilbert space $H$, is more than a prototype of C$^*$- and von Neumann algebras. By the Gelfand-Naimark theorem every C$^*$-algebra embeds as a self-adjoint subalgebra of some $B(H)$. It is known that $B(H)$ can be also regarded as  particular case of type 1 Cartan factors. There are six different types of Cartan factors (see section \ref{subsect: definitions} for definitions), which are employed in a Gelfand-Naimark type theorem to represent every JB$^*$-triple isometrically as a JB$^*$-subtriple of an $\ell_{\infty}$-sum of Cartan factors (cf. \cite{FriRu86}).\smallskip

As we shall see below, those complex Banach spaces whose open unit ball is a bounded symmetric domain were characterized by W. Kaup in \cite{Ka83} as the complex  Banach spaces $E$ admitting a continuous triple product $\{.,.,.\} : E\times E\times E\to E$ (bilinear and symmetric in the outer variables and conjugate linear in the middle one) satisfying a collection of algebraic and analytic axioms (see section \ref{subsect: definitions}). For the moment we shall simply note that every C$^*$-algebra is a JB$^*$-triple for the triple product\begin{equation}\label{eq triple product JCstar triple} \{x,y,z\} = \frac12\left( x y^* z + z y^* x\right),
\end{equation}, while the class of JB$^*$-triples is strictly wider than the collection of all C$^*$-algebras since it also contains, among other more exotic examples, all infinite dimensional complex Hilbert spaces. The fixed-points of the just commented triple product on a C$^*$-algebra $A$ are the partial isometries in $A$. The elements $e$ in a JB$^*$-triple $E$ satisfying $\{e,e,e\}= e$ are called tripotents. The set of all tripotents in $E$ will be denoted by $\mathcal{U} (E)$. As it will be detailed in section \ref{subsect: definitions}, there is a natural partial order and a notion of orthogonality for elements in $\mathcal{U} (E)$, which restricted to $\mathcal{U} (B(H))$ are precisely those employed by Moln{\'a}r in the theorem commented above.\smallskip

A triple homomorphism between JB$^*$-triples $E$ and $F$ is a linear map $T:E\to F$ preserving triple products. Every triple homomorphism between JB$^*$-triples is automatically continuous \cite[Lemma 1]{BarDanHor88}. If $T$ is a triple isomorphism (i.e. a bijective triple homomorphism), its restriction $T|_{\mathcal{U} (E)}: \mathcal{U} (E)\to \mathcal{U} (F)$ is a surjective isometry $T|_{\mathcal{U} (E)}: \mathcal{U} (E)\to \mathcal{U} (F)$ which preserves orthogonality and partial order in both directions (we recall that very injective triple homomorphism is an isometry \cite[Lemma 1]{BarDanHor88}). As we shall justify later, the restriction of $T$ to the corresponding subsets of all minimal tripotents is also a surjective isometry.\smallskip

As in the case of C$^*$-algebras, there exist JB$^*$-triples $E$ for which $\mathcal{U} (E) = \{0\}.$ However, in a JB$^*$-triple $E$ the extreme points of its closed unit ball are precisely the complete tripotents in $E$ (cf. \cite[Lemma 4.1]{BraKaUp78}, \cite[Proposition 3.5]{KaUp77} or \cite[Corollary 4.8]{EdRutt88}). Thus, every JB$^*$-triple which is also a dual Banach space contains an abundant set of tripotents. JB$^*$-triples which are additionally dual Banach spaces are called \emph{JBW$^*$-triples}. Each JBW$^*$-triple admits a unique (isometric) predual and its triple product is separately weak$^*$ continuous (cf. \cite{BarTi}). Since each Cartan factor is a dual Banach space, each $\ell_{\infty}$-sum of Cartan factors is a JBW$^*$-triple. The JBW$^*$-triples which are of this form are called atomic JBW$^*$-triples.\smallskip

In a recent collaboration with Y. Friedman, we studied bijective transformations preserving orthogonality and order between the sets of tripotents of two atomic JBW$^*$-triples. More concretely, let $\displaystyle M= \bigoplus_{i\in I}^{\ell_{\infty}} C_i$ and  $\displaystyle N = \bigoplus_{j\in J}^{\ell_{\infty}} \tilde{C}_j$ be atomic JBW$^*$-triples, where $C_i$ and $C_j$ are Cartan factors with rank $\geq 2$. Suppose that $\Phi : \mathcal{U}(M) \to \mathcal{U}(N)$ is a bijective transformation which preserves the partial ordering in both directions and orthogonality between tripotents. If we additionally assume that $\Phi$ is continuous at a tripotent $u = (u_i)_i$ in $M$ with $u_i\neq 0$ for all $i$ {\rm(}or we simply assume that $\Phi|_{\mathbb{T} u}$ is continuous at a tripotent $(u_i)_i$ in $M$ with $u_i\neq 0$ for all $i${\rm)}, then there exists a real linear triple isomorphism $T: M\to N$ such that $T(w) = \Phi(w)$ for all $w\in \mathcal{U} (M)$ (cf. \cite[Theorem 6.1]{FriPe2021}).\smallskip

Back to the original statement of Wigner's theorem, we recall the notion of transition probability between minimal (i.e. rank-one) projections in $B(H)$. Suppose $p= \xi\otimes \xi$ and $q= \eta\otimes \eta$ are two minimal projections in $B(H)$ with $\xi$ and $\eta$ in the unit sphere of $H$ (where $\xi\otimes \eta (\zeta) := \langle \zeta, \eta\rangle \xi$). The \emph{transition probability} from $p$ to $q$ is defined as
$$TP(p,q)=\hbox{tr}(p q) = \hbox{tr}(pq^*) = \hbox{tr}(q p^*) = |\langle \xi, \eta\rangle|^2.$$ Let $\mathcal{P}_1 (H)$ stand for the set of all minimal projections in $B(H)$. A bijective map $\Phi : \mathcal{P}_1 (H) \to \mathcal{P}_1 (H)$ is called a \emph{symmetry transformation} or a \emph{Wigner symmetry} if it preserves the transition probability between minimal projections, that is, $$TP (\Phi(p),\Phi(q)) = \hbox{tr} ( \Phi(p) \Phi(q)) = \hbox{tr} (pq) = TP(p,q),\hbox{ for all } (p,q \in \mathcal{P}_1 (H)).$$ Wigner's theorem proves that symmetry transformations on $\mathcal{P}_1(H)$ are characterized as those bijective maps $\Phi : \mathcal{P}_1(H) \to \mathcal{P}_1(H)$ for which there is an either unitary (i.e. a linear mapping $u : H\to H$ such that $u u ^* = u^* u =1$) or an antiunitary (i.e. a conjugate-linear mapping $u : H\to H$ such that $u u ^* = u^* u =1$) operator $u$ on $H$, unique up to multiplication by a unitary scalar, such that $\Phi (p) = u p u^*$ for all $p\in \mathcal{P}_1(H)$ (cf. \cite{Wig31}, \cite[page 12]{Molnar85}). \smallskip

The following version of Wigner's theorem for minimal partial isometries is also due to L. Moln{\'a}r. In the statement we see that the set of minimal projections in $B(H)$ has been enlarged to the set, $\mathcal{U}_{min} (B(H)),$ of all minimal partial isometries in $B(H)$. We recall that a partial isometry $e$ in $B(H)$ is called minimal if its left projection $e e^*$ (equivalently, its right projection $e^* e$) is minimal.

\begin{theorem}\label{t Molnar minimal pi}{\rm\cite[Theorem 2]{Molnar2002}} Let $\Phi: \mathcal{U}_{min} (B(H))\to \mathcal{U}_{min} (B(H))$ be a bijective mapping satisfying \begin{equation}\label{equation in Molnar's teorem} \hbox{\rm tr} ({\Phi(e)}^* \Phi(v)) = \hbox{\rm tr}(e^* v), \hbox{ for all } e,v\in \mathcal{U}_{min} (B(H)).
	\end{equation}
	Then $\Phi$ extends to a surjective complex-linear isometry. Moreover, one of the following statements holds: \begin{enumerate}[$(a)$]
		\item There exist unitaries $u, w$ on $H$ such that $\Phi(e) = u e w$ {\rm(}$e \in \mathcal{U}_{min}(B(H))${\rm)};
		\item There exist antiunitaries $u, w$ on $H$ such that $\Phi(e) = u e^* w$ {\rm(}$e \in \mathcal{U}_{min}(B(H))${\rm)}.
	\end{enumerate}
\end{theorem}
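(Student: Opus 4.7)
The plan is to exploit that $\mathrm{tr}(e^{*}v)$ is precisely the Hilbert--Schmidt inner product $\langle v,e\rangle_{HS}$, extend $\Phi$ complex-linearly to the space $F(H)$ of finite-rank operators on $H$, and then invoke a classification of rank-one preservers to arrive at the two stated forms. Every element of $\mathcal{U}_{min}(B(H))$ is a rank-one operator $\xi\otimes\eta$ with $\|\xi\|=\|\eta\|=1$, and a direct computation gives $\mathrm{tr}(e^{*}v)=\langle \xi_{2},\xi_{1}\rangle\langle \eta_{1},\eta_{2}\rangle$ for $e=\xi_{1}\otimes\eta_{1}$, $v=\xi_{2}\otimes\eta_{2}$. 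Since $a\otimes b=\|a\|\|b\|\bigl((a/\|a\|)\otimes(b/\|b\|)\bigr)$, the complex linear span of $\mathcal{U}_{min}(B(H))$ is all of $F(H)$.

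First I would extend $\Phi$ to a complex-linear map $\widetilde{\Phi}:F(H)\to F(H)$ by setting $\widetilde{\Phi}\bigl(\sum_{i}\lambda_{i}e_{i}\bigr):=\sum_{i}\lambda_{i}\Phi(e_{i})$. For well-definedness, if $\sum_{i}\lambda_{i}e_{i}=\sum_{j}\mu_{j}f_{j}$ in $F(H)$, then pairing both sides with an arbitrary $v\in\mathcal{U}_{min}(B(H))$ via $\mathrm{tr}(v^{*}\,\cdot\,)$ and applying the trace-preservation hypothesis yields $\mathrm{tr}\bigl(\Phi(v)^{*}\sum_{i}\lambda_{i}\Phi(e_{i})\bigr)=\mathrm{tr}\bigl(\Phi(v)^{*}\sum_{j}\mu_{j}\Phi(f_{j})\bigr)$. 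Surjectivity of $\Phi$ makes $\Phi(v)$ range over a spanning subset of $F(H)$, and non-degeneracy of the Hilbert--Schmidt pairing on $F(H)$ then forces $\sum_{i}\lambda_{i}\Phi(e_{i})=\sum_{j}\mu_{j}\Phi(f_{j})$. The analogous argument applied to $\Phi^{-1}$ supplies an inverse, so $\widetilde{\Phi}$ is a complex-linear bijection on $F(H)$ preserving the Hilbert--Schmidt inner product.

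Because $\widetilde{\Phi}$ carries $\mathcal{U}_{min}(B(H))$ onto itself and every nonzero rank-one operator is a scalar multiple of a minimal partial isometry, $\widetilde{\Phi}$ maps rank-one operators to rank-one operators in both directions. I would then invoke the classification of complex-linear rank-one preserving bijections on $F(H)$ (the Marcus--Moyls theorem and its infinite-dimensional extensions) to conclude that $\widetilde{\Phi}$ has one of the shapes $A\mapsto UAV$ or $A\mapsto UA^{T}V$, where transposition is taken in a fixed orthonormal basis and $U,V:H\to H$ are bijective linear operators. Examining the action on rank-one tensors $\xi\otimes\eta$, together with the preservation of the Hilbert--Schmidt norm (which forces $\|U\xi\|=\|V^{*}\eta\|=1$ for every unit $\xi,\eta$), then forces $U$ and $V$ to be unitary.

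Restricting back to $\mathcal{U}_{min}(B(H))$ yields the two advertised forms. The case $A\mapsto UAV$ directly gives $\Phi(e)=uew$ with unitaries $u=U$, $w=V$, as in part $(a)$. For the second, let $u_{0}$ denote the antiunitary conjugation attached to the chosen basis; the identity $A^{T}=u_{0}A^{*}u_{0}$ yields $\Phi(e)=U u_{0}e^{*}u_{0}V=ue^{*}w$ with antiunitaries $u:=U u_{0}$ and $w:=u_{0}V$, giving part $(b)$. The main obstacle I anticipate is the rank-one preserver classification itself for infinite-dimensional $H$; a self-contained alternative that sidesteps it would be to fix a right support $q\in\mathcal{P}_{1}(H)$, show that $\widetilde{\Phi}$ must carry the fiber $\{e\in\mathcal{U}_{min}(B(H)):e^{*}e=q\}$ either to another right-support fiber or to a left-support fiber, apply the classical Wigner theorem inside each such fiber, and then reconstruct the global operators $U$ and $V$ by tracking the intertwining among fibers.
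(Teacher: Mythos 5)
This statement is quoted background: the paper attributes it to \cite[Theorem 2]{Molnar2002} and offers no proof of its own, so there is nothing internal to compare your argument against line by line. That said, your sketch is essentially the standard (and viable) route to Moln\'{a}r's theorem, and its first step is exactly the linearization the paper later imports in general form from \cite[Theorem 2.3]{Pe2022TTP}: since $\mathrm{tr}(e^*v)$ is the Hilbert--Schmidt pairing and the minimal partial isometries span $F(H)$, the hypothesis forces a well-defined complex-linear bijective extension $\widetilde{\Phi}$ of $\Phi$ to the socle, preserving $\langle\cdot,\cdot\rangle_{HS}$. Where you then diverge from the paper's philosophy is in invoking the classification of rank-one preservers, which is special to type 1 factors and is precisely the kind of ``classical tool for concrete Cartan factors'' the paper is trying to avoid; this is fine for the theorem as stated but does not generalize. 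Two points need tightening. First, the Marcus--Moyls classification must be quoted in a version valid for linear bijections of $F(H)$ preserving rank one \emph{in both directions} for infinite-dimensional (possibly non-separable) $H$ (e.g.\ via the standard fiber argument you outline: the image of each ``row'' $\{x\otimes y_0: x\in H\}$ is again a row or a column, linearity in $x$ versus conjugate-linearity in $y$ distinguishes the two alternatives, and a connectedness argument makes the alternative global); without that, your workaround through Wigner's theorem on support fibers is not optional but necessary. Second, writing $\widetilde{\Phi}(\xi\otimes\eta)=U\xi\otimes W\eta$, the Hilbert--Schmidt norm condition only gives $\|U\xi\|\,\|W\eta\|=1$ for all unit $\xi,\eta$, i.e.\ $\|U\xi\|\equiv c$ and $\|W\eta\|\equiv c^{-1}$; one must rescale $U$ and $W$ before concluding they are unitary (and note that $a\mapsto UaW^*$ then realizes case $(a)$). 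With those repairs the argument goes through, and the transpose case converts to form $(b)$ exactly as you indicate via $A^T=u_0A^*u_0$.
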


The transition probability between two minimal projections $p,q$ in $B(H)$ coincides with tr$(p q^*)\in [0,1],$ so the hypothesis assumed by Moln{\'a}r in \eqref{equation in Molnar's teorem} (i.e. preservation of tr$(e^* v)\in \mathbb{C}$) is an analogue of transition probability preservation for non-necessarily positive minimal partial isometries. Let us analyse this new ``generalized transition probability''. If we fix a minimal partial isometry $e$ in $B(H),$ the functional $\varphi_e (x) = \hbox{tr} (e^* x)$ is the unique extreme point of the closed unit ball of $B(H)_*$, the predual of $B(H)$, at which $e$ attains its norm, so $\hbox{tr} (e^* v) = \varphi_e (v) $. This is the crucial point to consider the notion of triple transition pseudo-probability from a minimal tripotent to another minimal tripotent in an arbitrary JBW$^*$-triple as introduced in the recent reference \cite{Pe2022TTP}.  More concretely, for each minimal tripotent $e$ in a JBW$^*$-triple, $M,$ there exists a unique pure atom (i.e. an extreme point of the closed unit ball of $M_*$) $\varphi_e$ at which $e$ attains its norm and the corresponding Peirce-2 projection writes in the form $P_2 (e) (x) = \varphi_e(x) e$ for all $x\in M$ (cf. \cite[Proposition 4]{FriRu85}). The mapping $$\mathcal{U}_{min} (M)\to \partial_{e} (\mathcal{B}_{M_*}), \ \  e\mapsto \varphi_e $$ is a bijection from the set of minimal tripotents in $M$ onto the set of pure atoms of $M$. Given two minimal tripotents $e$ and $v$ in a JBW$^*$-triple $M$, we define the \emph{triple transition pseudo-probability} from $e$ to $v$ as the complex number given by \begin{equation}\label{eq triple transition pseudo prbability} TTP(e,v)= \varphi_v(e).
\end{equation} We can no longer use the term ``probability'' because $TTP(e,v)$ is an element in the closed unit ball of the complex plane. In the case of $B(H)$, the triple transition pseudo-probability between two minimal projections is precisely the usual transition probability in Wigner's theorem, while the hypothesis \eqref{equation in Molnar's teorem} in Theorem \ref{t Molnar minimal pi} simply says that $\Phi$ preserves triple transition pseudo-probabilities.\smallskip

We recall for later purposes that the triple transition pseudo-probability is symmetric in the sense that $TTP(e,v)= \overline{TTP(v,e)},$ for every couple of minimal tripotents $e,v\in M$ (see \cite[$(2.3)$]{Pe2022TTP}). \smallskip

In view of Theorem \ref{t Molnar minimal pi}, it is an attractive challenge to ask whether a bijection $\Phi$ between the sets of minimal tripotents of two Cartan factors  (or more generally of two atomic JBW$^*$-triples) $M$ and $N$ preserving triple transition pseudo-probabilities is precisely the restriction of a (complex-)linear triple isomorphism between the corresponding JBW$^*$-triples. This problem has been positively solved when $M$ and $N$ are both Cartan factors of type 1 (i.e. Banach spaces $B(H,K)$ of bounded linear operators between complex Hilbert spaces) or when $M$ and $N$ are both type 4 or spin Cartan factors (see \cite[Theorems 4.4 and 3.2]{Pe2022TTP}). It is worth to note that the proof of the results is built upon classic theorems on preservers and concrete tools for operator spaces and Hilbert spaces. The general problem remains open.\smallskip

This paper presents a complete solution to the problem just presented (see   Corollary \ref{c bijections preserving triple transition pseudo-probabilities and orthogonality}). Here, instead of combining classical tools on preservers for concrete Cartan factors, we shall turn our point of view to a completely newfangled strategy with arguments and tools taken from abstract theory of JB$^*$-triples. As we shall see in section \ref{subsect: definitions}, the achievements in \cite[Theorem 2.3]{Pe2022TTP} prove that each bijective transformation $\Phi$ preserving triple transition pseudo-probabilities between the sets of minimal tripotents of two atomic JBW$^*$-triples $M$ and $N$, admits an extension to a bijective {\rm(}complex{\rm)} linear mapping $T_0$ from the socle of $M$ onto the socle of $N$ whose restriction to $\mathcal{U}_{min} (M)$ is $\Phi$, where the socle of a JB$^*$-triple is the subspace linearly generated by its minimal tripotents. If we additionally assume that $\Phi$ preserves orthogonality,  then $\Phi$ admits an extension to a surjective (complex-)linear {\rm(}isometric{\rm)} triple isomorphism from $M$ onto $N$ (cf. \cite[Corollary 2.5]{Pe2022TTP}). In Theorem \ref{t preservers of ttp also preserve orthog} we prove that every bijection preserving triple transition pseudo-probabilities between the sets of minimal tripotents of two atomic JBW$^*$-triples automatically preserves orthogonality in both directions. \smallskip

The main conclusion in this paper shows that the set of minimal tripotents in an atomic JBW$^*$-triple together with the triple transition pseudo-probabilities among its elements is a complete invariant valid to determine the whole structure of the JB$^*$-triple (cf. Corollary \ref{c bijections preserving triple transition pseudo-probabilities and orthogonality}). The result should be complemented with the main conclusion of \cite{FriPe2021}, which asserts that in an atomic JBW$^*$-triple $M$ containing no rank-one Cartan factors the poset of all tripotents in $M$ with the partial order and the relation of orthogonality is a complete invariant for its structure of real JBW$^*$-triple. Both results together validate the full analogy in the setting of JB$^*$-triples with the different statements of Wigner's theorem for projections.\smallskip

Another result derived from our main conclusion (see Corollary \ref{c bijections preserving triple transition pseudo-probabilities are isometries}) proves that every bijection preserving triple transition pseudo-probabilities between the sets of minimal tripotents in two atomic JBW$^*$-triples is an isometry with respect to the gap metric (i.e. the metric given by the JB$^*$-triple norm).\smallskip

It is known that the gap metric and the usual transition probability between minimal projections in $B(H)$ are mutually determined (see \cite[$(2.6.13)$ in page 127]{Molnar85} or \cite{Geher2014}). However, as we shall see in Remark \ref{r TTP and distance are not mutually determined}, the triple transition pseudo-probability and the gap metric are not, in general, related each other. It naturally arises the problem of studying those bijections preserving distances between the sets of minimal tripotents in two atomic JBW$^*$-triples. This task is culminated in Theorem \ref{t Tingley for minimal in atomic JBW-triples}. In the just quoted result we establish a variant of Tingley's theorem by proving that every surjective isometry between the sets of minimal tripotents in two atomic JBW$^*$-triples admits an extension to a real linear surjective isometry between these two JBW$^*$-triples. The proof is obtained by an application of the result describing the bijections preserving triple transition pseudo-probabilities between sets of minimal tripotents in atomic JBW$^*$-triples. However, the class of surjective isometries between the sets of minimal tripotents in two atomic JBW$^*$-triples is, in general, strictly wider than the set of bijections preserving triple transition pseudo-probabilities, since we can also find examples of extensions which are conjugate linear or which are neither complex linear nor conjugate linear.

\section{Background and state-of-the-art}\label{subsect: definitions}

This section is aimed to provide the reader with the basic terminology and notions to understand the results and to fill the gaps left in the introduction.  We shall also approach to a brief state-of-the-art of the main problem tackled in this paper.\smallskip

Our arguments will employ tools developed in theory of JB$^*$-triples. So, it seems pertinent to recall the definition of JB$^*$-triple (cf. \cite{Ka83}), a mathematical model originally arisen in holomorphic theory deeply studied in functional analysis.\smallskip

A \emph{JB$^*$-triple} is a complex Banach space $E$ together with a continuous triple product $\J \cdot\cdot\cdot :
E\times E\times E \to E,$ which is symmetric and bilinear in the first and third variables, conjugate-linear in the middle one, and satisfies the following axioms:
\begin{enumerate}[{\rm (a)}] \item (Jordan identity)
	$$L(a,b) L(x,y) = L(x,y) L(a,b) + L(L(a,b)x,y)
	- L(x,L(b,a)y)$$ for $a,b,x,y$ in $E$, where $L(a,b)$ is the operator on $E$ given by $x \mapsto \J abx;$
	\item $L(a,a)$ is a hermitian operator with non-negative spectrum for all $a\in E$;
	\item $\|\{a,a,a\}\| = \|a\|^3$ for each $a\in E$.\end{enumerate}

The examples of mathematical models included in the class of JB$^*$-triples is perhaps one of the biggest attractiveness of this notion. We have already commented that every C$^*$-algebra is a JB$^*$-triple. The same triple product employed for C$^*$-algebras given in \eqref{eq triple product JCstar triple} serves to equip the space $B(H,K),$ of all bounded linear operators between two complex Hilbert spaces $H$ and $K$, with a structure of JB$^*$-triple. The JB$^*$-triples of the form $B(H,K)$ are known as \emph{Cartan factors of type 1}. There are another 5 types of Cartan factors. Cartan factors of types 2 and 3 are subtriples of $B(H)$ defined in the following way. Fix a conjugation $j$ (i.e. a conjugate-linear isometry or period 2) on a complex Hilbert space $H$, and define a linear involution on $B(H)$ by $x\mapsto x^t:=jx^*j$ --this is an infinite dimensional version of the transposition in $M_n(\mathbb{C})$. \emph{Cartan factors of type 2 and 3} are the JB$^*$-subtriples of $B(H)$ of all $t$-skew-symmetric and $t$-symmetric operators, respectively.\smallskip

A \emph{Cartan factor of type 4}, also called a \emph{spin factor},\label{def spin factor} is a complex Hilbert space $M$ provided with a conjugation $x\mapsto \overline{x},$ where the triple product and the norm are defined by \begin{equation}\label{eq spin product}
\{x, y, z\} = \langle x, y\rangle z + \langle z, y\rangle  x -\langle x, \overline{z}\rangle \overline{y},
\end{equation} and \begin{equation}\label{eq spin norm} \|x\|^2 = \langle x, x\rangle  + \sqrt{\langle x, x\rangle ^2 -|
\langle x, \overline{x}\rangle  |^2},
 \end{equation} respectively (cf. \cite[Chapter 3]{Fri2005}). The \emph{Cartan factors of types 5 and 6} (also called \emph{exceptional} Cartan factors) are spaces of matrices over the eight dimensional complex algebra of Cayley numbers; the type 6 consists of all $3\times 3$ self-adjoint matrices and has a natural Jordan algebra structure, and the type 5 is the subtriple consisting of all $1\times 2$ matrices (see \cite{Ka97, Harris74, HervIs92} and the recent references \cite[\S 6.3 and 6.4]{HamKalPe20}, \cite[\S 3]{HamKalPe22determinants} for more details).\smallskip

As we have already commented during the introduction, partial isometries in a C$^*$-algebras $A$ are precisely the elements which are fixed points for its natural triple product \eqref{eq triple product JCstar triple}. The fixed points of the triple product of a JB$^*$-triple $E$ are called \emph{tripotents}.  We write $\mathcal{U} (E)$ for the set of all tripotents in $E$. Each $e$ in $\mathcal{U} (E)$ produces the following \emph{Peirce decomposition} of the space $E$ in terms of the eigenspaces of the operator $L(e,e)$:
\begin{equation}\label{Peirce decomp} {E} = {E}_{0} (e) \oplus  {E}_{1} (e) \oplus {E}_{2} (e),\end{equation} where ${
E}_{k} (e) := \{ x\in {E} : L(e,e)x = {\frac k 2} x\}$ is a subtriple of ${E}$ called the \emph{Peirce-$k$ subspace} ($k=0,1,2$). \emph{Peirce-$k$ projection} is the name given to the natural projection of ${E}$ onto ${E}_{k} (e)$ and it is usually denoted by $P_{k} (e)$. 
Triple products among elements in different Peirce subspaces obey certain laws known as \emph{Peirce arithmetic}. Concretely,
the inclusion $\J {{E}_{k}(e)}{{E}_{l}(e)}{{E}_{m}(e)}\! \subseteq {E}_{k-l+m} (e),$ and the identity $\J {{E}_{0}(e)}{{E}_{2} (e)}{{E}}\! =\! \J {{E}_{2} (e)}{{E}_{0} (e)}{{E}}\! =\! \{0\},$ hold for all $k,l,m\in \{0,1,2\}$, where ${E}_{k-l+m} (e) = \{0\}$ whenever $k-l+m$ is not in $\{0,1,2\}$.\smallskip  

The Peirce-$2$ subspace ${E}_{2} (e)$ is a unital JB$^*$-algebra with respect to the product and involution given by $x \circ_e y = \J xey$ and $x^{*_e} = \J exe,$ respectively. The self-adjoint or hermitian part of $E_2(e)$ will be denoted by $E^{1}(e)$, that is, $$E^{1}(e)=\{x\in E_2(e) : x^{*_e} = \{e,x,e\}= x \}=\{x\in E : \{e,x,e\}= x \}.$$

Let us recall next the analogue to minimal partial isometry in the wider setting of JB$^*$-triples. A non-zero tripotent $e$ in a JB$^*$-triple $E$ is called (\emph{algebraically}) \emph{minimal} if  $E_2(e)=\CC e \neq \{0\}$. We shall denote by $\mathcal{U}_{min} (E)$ the set of all minimal tripotents in $E$. The tripotents $e\in E$ satisfying $E_0(e) =\{0\}$ are called \emph{complete}.\smallskip

As we have commented in the introduction, von Neumann algebras identify with those C$^*$-algebras which are dual Banach spaces. JBW$^*$-triples, defined as those JB$^*$-triples which are dual Banach spaces, play the role of von Neumann algebras in the JB$^*$-triple setting.  A concrete subclass is determined by those JBW$^*$-triples which coincide with the w$^*$-closure of the linear span of their minimal tripotents --$B(H)$ is an example--. The triples in this subclass are known as \emph{atomic} JBW$^*$-triples. Deep structure results, established by Y. Friedman and B. Russo, prove that every atomic JBW$^*$-triple is an $\ell_{\infty}$-sum of Cartan factors (cf. \cite[Proposition 2 and Theorem E]{FriRu86}), and that every JB$^*$-triple embeds isometrically as a JB$^*$-subtriple of an atomic JBW$^*$-triple.\smallskip

It is now moment to concrete the definition of the partial order and the notion of orthogonality among tripotents in a JB$^*$-triple. Let us take $e, v\in \mathcal{U} (E)$, where $E$ is a generic JB$^*$-triple. Following a concept that generalises the notion of orthogonality for partial isometries in $B(H)$, we shall say that $e$ is \emph{orthogonal} to $u$ ($e\perp u$ in short) if $\{e,e,u\}=0$ (equivalently,  $L(e,u) = 0\Leftrightarrow$ $L(u,e) = 0\Leftrightarrow$  $e \in E_0(u)\Leftrightarrow$ $u\in E_0(e)$ cf. \cite{loos1977bounded,Batt91,BurFerGarMarPe08}). It is known that any two orthogonal tripotents $e$ and $v$ in JB$^*$-triple $E$ are $M$-orthogonal, that is, $\|e\pm v\| = \max\{\|e\|, \|v\|\} = 1$ (cf. \cite[Lemma 1.3$(a)$]{FriRu85}). \smallskip

The rank of a JB$^*$-triple $E$ is the minimal cardinal number $r$ satisfying $\hbox{card}(S) \leq r$ for every \emph{orthogonal} subset $S \subseteq E$, where by an orthogonal subset we mean a subset not containing zero and satisfying that $x \perp y$ for every $x\neq y$ in $S$ (cf. \cite{Ka97}, \cite{BuChu92} and \cite{BeLoPeRo04} for basic background on the rank of a Cartan factor and a JBW$^*$-triple, and its relation with reflexivity).\smallskip

The natural partial order among partial isometries in $B(H)$ and, more generally, among tripotents in a JB$^*$-triple $E$ is defined by $e\leq u$ in $\mathcal{U} (E)$ if $u-e$ is a tripotent and $u-e \perp e$. This partial order is precisely the order consider by L. Moln{\'a}r in \cite{Molnar2002},  and it is a central notion in the theory of JB$^*$-triples (cf., for example, the recent papers \cite{HamKalPePfi20,HamKalPePfi20Groth,HamKalPe20, HamKalPe22determinants, Ham21, HamKalPeOrder}). 
Thanks to the partial ordering we can consider tripotents which are minimal with respect to this ordering. It is easy to check that every algebraically minimal tripotent is (order) minimal, though the reciprocal implication does not necessarily hold for general JB$^*$-triples, in a JBW$^*$-triple order minimal and algebraic minimal tripotents coincide (cf. \cite[Corollary 4.8]{EdRutt88} and \cite[Lemma 4.7]{Batt91}).\smallskip

A very useful tool in the representation theory of JB$^*$-triples employed in our arguments and obtained in \cite[Lemma 3.10]{FerPe18Adv}, allows to describe the theoretical position of two arbitrary minimal tripotents in a Cartan factor of rank greater than or equal to 2. In order understand the statement employed later, we recall several basic relations between tripotents. Let $u,v$ be two tripotents in a JB$^*$-triple $E$. We shall say that $u$ and $v$ are \emph{collinear} ($u\top v$ in short) if $u\in E_1(v)$ and $v\in E_1(u)$. The tripotent $u$ \emph{governs} the tripotent $v$ ($u \vdash v$ in short) whenever $v\in E_{2} (u)$ and $u\in E_{1} (v)$. An ordered quadruple $(u_{1},u_{2},u_{3},u_{4})$ of minimal tripotents in a JB$^*$-triple $E$ is called a \emph{quadrangle} if $u_{1}\bot u_{3}$, $u_{2}\bot u_{4}$, $u_{1}\top u_{2} \top u_{3}\top u_{4} \top u_{1}$ and $u_{4}=2 \{{u_{1}},{u_{2}},{u_{3}}\}$ (the latter equality also holds if the indices are cyclically permutated, e.g. $u_{2} = 2 \{{u_{3}},{u_{4}},{u_{1}}\}$). 
An ordered triplet $ (v,u,\tilde v)$ of minimal tripotents in $E$, is called a \emph{trangle} if $v\bot \tilde v$, $u\vdash v$, $u\vdash \tilde v$ and $ v = Q(u)\tilde v$ (see \cite[\S 1]{DanFri87}).\smallskip

Along this note, the unit sphere and the closed unit ball of a normed space $X$ will be denoted by $S_{_X}$ and $\mathcal{B}_{X}$, respectively, and we shall write $\mathbb{T}$ for $S_{_\mathbb{C}}$.

\section{Main result}

In our first result, we shall see that bijections between sets of minimal tripotents in two atomic JBW$^*$-triples preserves the relation ``being collinear'' among them. It should be noted that Cartan factors of rank-one constitute a serious obstacle for the theorem describing the bijections preserving the partial order in both directions and orthogonality in one direction between the posets of all tripotents of two atomic JBW$^*$-triples (cf. \cite[Theorem 6.1 and Remark 3.6]{FriPe2021}), but in all the results in this manuscript we do not need to impose any restriction on the rank.\smallskip

In general, the linear span of all minimal tripotents in a JB$^*$-triple $E$, called the \emph{socle} of $E$ (soc$(E)$ in short), need not be a closed subspace. That is the case of the socle of $B(H)$, which coincides with the subspace, $\mathcal{F} (H)$, of all finite rank operators, and it is not closed when $H$ is infinite dimensional. However, if a JB$^*$-triple $E$ has finite rank (equivalently, $E$ is a reflexive JB$^*$-triple), we have soc$(E) = E$ (cf. \cite[Proposition 4.5 and Remark 4.6]{BuChu92} or \cite{BeLoPeRo04}).

\begin{proposition}\label{p preservation of collienarity and Peirce-1 subspaces} Let $\Phi: \mathcal{U}_{min}(M)\to \mathcal{U}_{min}(N)$ be a bijection preserving triple transition pseudo-probabilities, where $M$ and $N$ are two atomic JBW$^*$-triples. Suppose $e$ and $v$ are two minimal collinear {\rm(}$e \top v${\rm)} tripotents in $\mathcal{U}_{min}(M)$. Then $\Phi (e)$ and $\Phi(v)$ are collinear {\rm(}$\Phi (e)\top \Phi(v)${\rm)} in $\mathcal{U}_{min}(N)$.
\end{proposition}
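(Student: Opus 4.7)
The plan is to detect collinearity through the family $w_\lambda := (e+\lambda v)/\sqrt{2}$, $\lambda \in \mathbb{T}$, which consists of minimal tripotents of $M$ precisely because $e \top v$, and to transport this family to $N$ via the complex-linear extension $T_0 : \mathrm{soc}(M) \to \mathrm{soc}(N)$ of $\Phi$ provided by \cite[Theorem 2.3]{Pe2022TTP}. Once the family lands in $\mathcal{U}_{min}(N)$, collinearity of the images is extracted from a Fourier-type decomposition of the resulting tripotent identities in $N$.

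First I would show that $w_\lambda \in \mathcal{U}_{min}(M)$ for every $\lambda \in \mathbb{T}$. Tripotency is obtained from a direct expansion of $\{w_\lambda, w_\lambda, w_\lambda\}$ using the collinearity identities $\{e,e,v\} = \tfrac{1}{2} v$ and $\{v,v,e\} = \tfrac{1}{2} e$ together with the Peirce-type vanishing $\{e,v,e\} = 0 = \{v,e,v\}$, where the first follows from $\{e,v,e\} \in M_{2-1+2}(e) = \{0\}$ and the second from $\{v,e,v\} \in M_{2-1+2}(v) = \{0\}$. Minimality of $w_\lambda$ in $M$ rests on the description, in \cite[Lemma 3.10]{FerPe18Adv}, of the subtriple generated by a collinear pair of minimal tripotents (a two-dimensional rank-one subtriple in which $e,v$ play the role of an orthonormal basis and every unit vector is a minimal tripotent), and may equally be verified case-by-case in each of the six types of Cartan factors.

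Once $w_\lambda \in \mathcal{U}_{min}(M) \subseteq \mathrm{soc}(M)$ is in hand, the linearity of $T_0$ together with $T_0|_{\mathcal{U}_{min}(M)} = \Phi$ yields
\[ \Phi(w_\lambda) = T_0(w_\lambda) = \frac{\Phi(e) + \lambda\, \Phi(v)}{\sqrt{2}},\]
whose left-hand side is a minimal tripotent of $N$. Writing $a = \Phi(e)$ and $b = \Phi(v)$, this provides the family of tripotent identities $\{a + \lambda b, a + \lambda b, a + \lambda b\} = 2(a + \lambda b)$ in $N$, one for each $\lambda \in \mathbb{T}$.

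The last step would be to expand the left-hand side by trilinearity and, after using $\{a,a,a\} = a$ and $\{b,b,b\} = b$, rewrite the identity as a trigonometric polynomial in $\lambda$ with modes $1, \lambda, \bar\lambda, \lambda^2$, equated to $2a + 2\lambda b$. Linear independence of these four characters on $\mathbb{T}$ forces matching of the corresponding vector coefficients, which delivers $\{a,a,b\} = \tfrac{1}{2} b$ and $\{a,b,b\} = \tfrac{1}{2} a$, i.e.\ $b \in N_1(a)$ and $a \in N_1(b)$, so that $\Phi(e) \top \Phi(v)$. The only delicate point in this approach is the verification that $w_\lambda$ is \emph{minimal} in $M$, and not merely a tripotent, since that is what licenses the identification $T_0(w_\lambda) = \Phi(w_\lambda) \in \mathcal{U}_{min}(N)$; the tripotency of $w_\lambda$ and the final Fourier extraction are both routine Peirce-type computations.
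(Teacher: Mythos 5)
Your proposal is correct, and its second half takes a genuinely different --- and arguably cleaner --- route than the paper's. Both arguments rest on the same two inputs: the complex-linear extension $T_0$ of $\Phi$ to the socles from \cite[Theorem 2.3]{Pe2022TTP}, and the fact that $\lambda_1 e+\lambda_2 v$ is a \emph{minimal} tripotent for every $(\lambda_1,\lambda_2)\in S_{\ell_2^2}$ whenever $e\top v$ are minimal. For the latter the correct reference is the Lemma on page 306 of \cite{DanFri87} (which is what the paper cites), not \cite[Lemma 3.10]{FerPe18Adv}, which only describes the relative position of two minimal tripotents; this is the one citation you should fix, and you are right to flag minimality of $w_\lambda$ as the only delicate point --- tripotency of $w_\lambda$ alone would not let you write $\Phi(w_\lambda)=T_0(w_\lambda)$. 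From there the paper works entirely on the image side: it first rules out $\Phi(e),\Phi(v)$ lying in different Cartan summands, then splits into a rank-one case (a Hilbert-space polarization computation) and a rank $\geq 2$ case, where it invokes the quadrangle/trangle classification of \cite[Lemma 3.10]{FerPe18Adv} for $\Phi(v)$ relative to $\Phi(e)$ and kills the coefficients one by one, using minimality of $\Phi\bigl(\tfrac{1}{\sqrt2}(e+v)\bigr)$ and the substitution $e\mapsto ie$. Your Fourier extraction over the whole circle $\lambda\in\mathbb{T}$ replaces all of that: comparing the coefficients of the independent modes $1,\lambda,\bar\lambda,\lambda^2$ in $\{a+\lambda b,a+\lambda b,a+\lambda b\}=2(a+\lambda b)$ gives $\{a,a,b\}=\tfrac12 b$ and $\{b,b,a\}=\tfrac12 a$ (together with $\{a,b,a\}=\{b,a,b\}=0$), i.e.\ $a\top b$, with no case analysis, no rank distinction, no preliminary reduction to a common Cartan summand, and using only tripotency --- not minimality --- of the images. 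I have checked the expansion and the mode-matching; the argument is sound.
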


\begin{proof} By hypotheses, $M$ and $N$ can be written as $\ell_{\infty}$-sums of two families of Cartan factors $\{C_i: i \in\Lambda_1\}$ and $\{D_j: j \in\Lambda_2\}$, respectively. Each minimal tripotent in $M$ (respectively, in $N$) lies in a single summand. Let us observe that, by hypotheses, $e$ and $v$ must lie in the same Cartan factor $C_{i_0}$ among those summands in $M$, otherwise they would be orthogonal.\smallskip
			
By \cite[Theorem 2.3]{Pe2022TTP} there exists a complex linear bijection $T_0: soc(M)\to soc(N)$ whose restriction to $\mathcal{U}_{min}(M)$ is $\Phi$.\smallskip

If $\Phi (e)$ and $\Phi(v)$ belong to different Cartan factors $D_{j_1}$ and $D_{j_2}$ with $j_1\neq j_2$, then they are orthogonal. However, in such a case $\frac{1}{\sqrt{2}} e + \frac{1}{\sqrt{2}} v$ is a minimal tripotent (cf. \cite[Lemma in page 306]{DanFri87}), and thus $\Phi \left(\frac{1}{\sqrt{2}} e + \frac{1}{\sqrt{2}} v \right) =  T_0 \left(\frac{1}{\sqrt{2}} e + \frac{1}{\sqrt{2}} v \right) = \frac{1}{\sqrt{2}} \Phi (e) + \frac{1}{\sqrt{2}} \Phi (v )$ must be a minimal tripotent too, which is incompatible with $\Phi(e)\perp \Phi(v)$. We can therefore assume that $\Phi (e)$ and $\Phi(v)$ both belong to the same Cartan factor $D_{j_0}$ and $\Phi(e) \not\perp \Phi(v)$. \smallskip

We shall distinguish two cases. If $D_{j_0}$ is a rank-one Cartan factor it must be a complex Hilbert space with inner product $\langle., .\rangle$, regarded as as type 1 Cartan factor, and both $\Phi(e)$ and $\Phi(v)$ are norm-one elements. As we commented before, they are collinear as minimal tripotents if and only if they are orthogonal in the Euclidean sense of this complex Hilbert space. Since, for each $(\lambda_1,\lambda_2)\in S_{\ell_2^2}$, the element $\lambda_1 e + \lambda_2 v$ is a minimal tripotent (cf. \cite[Lemma in page 306]{DanFri87}), its image under $\Phi$ or $T_0$, that is,  $\Phi(\lambda_1 e + \lambda_2 v)= T_0( \lambda_1 e + \lambda_2 v ) = \lambda_1 \Phi(e) + \lambda_2 \Phi(v)$, is a minimal tripotent in $D_{j_0}$, equivalently, a norm-one element of this Hilbert space. Therefore $$\begin{aligned}
1=\left\| \lambda_1 \Phi(e) + \lambda_2 \Phi(v) \right\|^2 &= \left\langle \lambda_1 \Phi(e) + \lambda_2 \Phi(v), \lambda_1 \Phi(e) + \lambda_2 \Phi(v)
\right\rangle \\
 &= |\lambda_1|^2 \|\Phi(e)\|^2 + |\lambda_2|^2  \|\Phi(v)\|^2 +2 \Re\hbox{e} \lambda_1 \overline{\lambda_2} \left\langle \Phi(e) ,  \Phi(v)
 \right\rangle \\
 &= 1 +2 \Re\hbox{e} \lambda_1 \overline{\lambda_2} \left\langle \Phi(e) ,  \Phi(v)
 \right\rangle
\end{aligned},$$ for all $(\lambda_1,\lambda_2)\in S_{\ell_2^2},$ witnessing that $\left\langle \Phi(e) ,  \Phi(v)
\right\rangle=0$, which proves that $\Phi(e)$ and $\Phi(v)$ are orthogonal in the Euclidean sense in the Hilbert space $D_{j_0}$, equivalently, collinear in the Cartan factor $D_{j_0}$.\smallskip

We assume next that $D_{j_0}$ is Cartan factor with rank $\geq 2$. By the representation result in \cite[Lemma 3.10]{FerPe18Adv}, applied to $\Phi(e)$ and $\Phi(v)$ in $D_{j_0}$, one of the following statements holds:\begin{enumerate}[$(1)$]\item There exist minimal tripotents $\tilde{v}_2,\tilde{v}_3,\tilde{v}_4$ in $D_{j_0}$ and $\alpha, \beta,\gamma, \delta\in \mathbb{C}$ such that $(\Phi(e),\tilde{v}_2,\tilde{v}_3,\tilde{v}_4)$ is a quadrangle and $\Phi(v) = \alpha \Phi(e) + \beta \tilde{v}_2 + \gamma \tilde{v}_4 + \delta \tilde{v}_3$, $\alpha \delta = \beta \gamma$ and $|\alpha|^2 + |\beta|^2 + |\gamma|^2 +|\delta|^2 =1$;
\item There exists a rank two tripotent $w\in D_{j_0},$ a minimal tripotent $\tilde{v}\in D_{j_0}$ and $\alpha, \beta, \gamma\in \mathbb{C}$ such that $(\Phi(e), w, \tilde{v})$ is a trangle, $\Phi(v) = \alpha \Phi(e) + \beta w + \delta \tilde{v}$, $\alpha \delta = \beta^2$ and $|\alpha|^2 + 2 |\beta|^2 +|\delta|^2 =1$.
\end{enumerate} Let us observe that $|\delta|<1,$ otherwise we would contradict $\Phi(e)\perp \Phi(v)$. We shall treat each case independently. \smallskip

$(1)$ Since $e$ and $v$ are collinear, the element $ \frac{1}{\sqrt{2}} e + \frac{1}{\sqrt{2}} v$ is a minimal tripotent (cf. \cite[Lemma in page 306]{DanFri87}) and the same must occur to $$\begin{aligned}\Phi \left( \frac{1}{\sqrt{2}} e + \frac{1}{\sqrt{2}} v \right) &= T_0 \left( \frac{1}{\sqrt{2}} e + \frac{1}{\sqrt{2}} v \right) =   \frac{1}{\sqrt{2}} \Phi \left( e\right) + \frac{1}{\sqrt{2}} \Phi \left(v \right) \\
	&= \frac{1}{\sqrt{2}} ( 1+\alpha) \Phi(e) + \frac{1}{\sqrt{2}} \beta \tilde{v}_2 + \frac{1}{\sqrt{2}} \gamma \tilde{v}_4 + \frac{1}{\sqrt{2}} \delta \tilde{v}_3,
\end{aligned}$$ but the latter being a minimal tripotent implies that  $(1+\alpha)\delta - \beta \gamma =0$. However, since $\alpha \delta = \beta \gamma$, it follows that $\delta =0 = \beta \gamma$. Let us assume that $\beta =0$ (the other case, i.e. $\gamma=0$, is similar). Therefore, the element $\Phi \left( \frac{1}{\sqrt{2}} e + \frac{1}{\sqrt{2}} v \right) = \frac{1}{\sqrt{2}} ( 1+\alpha) \Phi(e) + \frac{1}{\sqrt{2}} \gamma \tilde{v}_4 $ must be a minimal tripotent, what occurs if and only if  $\frac{| 1 + \alpha|^2}{2} + \frac{|\gamma|^2}{2} =1$. We deduce that $\Re\hbox{e}(\alpha)=0$. Replacing $e$ with $i e$, and having in mind that $i e $ and $v$ are collinear too and $T_0$ is complex linear, we get $\Im\hbox{m} (\alpha) =0$. Therefore, $\alpha =0,$ $\gamma\in \mathbb{T}$ and $\Phi(v) = \gamma \tilde{v}_4$ is collinear to $\Phi(e)$. \smallskip

$(2)$  As in the previous case, the element $$\Phi \left( \frac{1}{\sqrt{2}} e + \frac{1}{\sqrt{2}} v \right) = T_0 \left( \frac{1}{\sqrt{2}} e + \frac{1}{\sqrt{2}} v \right) = \frac{1}{\sqrt{2}} (1+\alpha) \Phi(e) + \frac{1}{\sqrt{2}} \beta w + \frac{1}{\sqrt{2}} \delta \tilde{v},$$ must be a minimal tripotent, and thus $(1+\alpha)\delta - \beta^2 =0,$ and consequently $\delta = \beta =0$ and $\alpha \in \mathbb{T}$ which is impossible because $\Phi$ preserves triple transition pseudo-probabilities. So, the second case is discarded and the proof is concluded.
\end{proof}

We can now establish our main result showing that every bijection preserving triple transition pseudo-probabilities between the posets of minimal tripotents of two atomic JBW$^*$-triples automatically preserves orthogonality.

\begin{theorem}\label{t preservers of ttp also preserve orthog} Let $\Phi: \mathcal{U}_{min}(M)\to \mathcal{U}_{min}(N)$ be a bijection preserving triple transition pseudo-probabilities, where $M$ and $N$ are two atomic JBW$^*$-triples. Then $\Phi$ preserves orthogonality in both directions.
\end{theorem}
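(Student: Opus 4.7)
The plan is to prove the implication $e\perp v\Rightarrow \Phi(e)\perp\Phi(v)$; since the symmetry $TTP(e,v)=\overline{TTP(v,e)}$ shows that $\Phi^{-1}$ is itself a TTP-preserving bijection, the reverse implication will then follow automatically. Fix orthogonal minimal tripotents $e,v\in \mathcal{U}_{min}(M)$ and dispose of two easy cases first. If $\Phi(e)$ and $\Phi(v)$ lie in different Cartan summands of $N$ they are orthogonal for free. If they lie in a common rank-one Cartan summand $D$ (a complex Hilbert space, which admits no pair of orthogonal minimal tripotents), the condition $TTP(\Phi(e),\Phi(v))=TTP(e,v)=0$ translates into Hilbert orthogonality, which in a rank-$1$ Cartan factor is exactly collinearity as tripotents; Proposition~\ref{p preservation of collienarity and Peirce-1 subspaces} applied to $\Phi^{-1}$ would then yield $e\top v$, contradicting $e\perp v$. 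Hence we may assume $\Phi(e),\Phi(v)$ share a Cartan summand $D$ of $N$ of rank $\geq 2$.

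Arguing by contradiction, assume $\Phi(e)\not\perp\Phi(v)$ inside $D$. By \cite[Lemma 3.10]{FerPe18Adv}, the pair $(\Phi(e),\Phi(v))$ is described by either a trangle $(\Phi(e),w,\tilde v)$ or a quadrangle $(\Phi(e),\tilde v_2,\tilde v_3,\tilde v_4)$, with $\Phi(v)$ expanded as in that lemma. Projecting $\Phi(v)$ onto $N_2(\Phi(e))=\mathbb{C}\Phi(e)$ shows that the coefficient $\alpha$ of $\Phi(e)$ equals $\varphi_{\Phi(e)}(\Phi(v))=\overline{TTP(e,v)}=0$. In the trangle case the relation $\beta^2=\alpha\delta=0$ then gives $\Phi(v)=\delta\tilde v\in\mathbb{T}\tilde v$, which is orthogonal to $\Phi(e)$ by the trangle structure, contradicting the standing assumption. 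In the quadrangle case $\beta\gamma=\alpha\delta=0$ kills at least one of $\beta,\gamma$, and the immediate sub-cases $\beta=\gamma=0$ (giving $\Phi(v)\in\mathbb{T}\tilde v_3\perp\Phi(e)$) and $\delta=0$, $\gamma\neq 0$ (giving $\Phi(v)\in\mathbb{T}\tilde v_4$ collinear with $\Phi(e)$, whence $v\top e$ through Proposition~\ref{p preservation of collienarity and Peirce-1 subspaces} applied to $\Phi^{-1}$) are contradictions as well. Using the $\tilde v_2\leftrightarrow\tilde v_4$ symmetry, one is reduced to ruling out
\[
\Phi(v)=\gamma\,\tilde v_4+\delta\,\tilde v_3,\qquad \gamma\neq 0,\ \delta\neq 0.
\]

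The step I expect to be the main obstacle is excluding this last configuration; the plan is to derive its impossibility from the following Peirce-type lemma: \emph{if $u$ is a minimal tripotent in an arbitrary JB$^*$-triple with $u\top e$, and $v$ is any tripotent with $v\perp e$, then $\varphi_u(v)=0$, equivalently $P_2(u)(v)=0$.} I would prove this via the Jordan identity specialised at $a=b=u$, $x=y=e$: since $L(u,u)e=\tfrac12 e$ collapses the identity to $[L(u,u),L(e,e)]=0$, the operator $L(u,u)$ preserves every Peirce space of $e$. Decomposing $v=v_0+v_1+v_2$ along the Peirce spaces of $u$ one still has $v_k\in M_0(e)$ for each $k$, while $v_2\in M_2(u)\cap M_0(e)=\mathbb{C}u\cap M_0(e)=\{0\}$ since $u\in M_1(e)$.

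To close the argument, set $u_4:=\Phi^{-1}(\tilde v_4)\in \mathcal{U}_{min}(M)$. The collinearity $\tilde v_4\top\Phi(e)$ inside $D$ and Proposition~\ref{p preservation of collienarity and Peirce-1 subspaces} applied to $\Phi^{-1}$ give $u_4\top e$, so the Peirce-type lemma applied to $u_4,e,v$ produces $\varphi_{u_4}(v)=0$, i.e.\ $TTP(v,u_4)=0$; preservation then transports this to $\varphi_{\tilde v_4}(\Phi(v))=TTP(\Phi(v),\tilde v_4)=0$. On the other hand, projecting $\Phi(v)=\gamma\tilde v_4+\delta\tilde v_3$ onto $N_2(\tilde v_4)=\mathbb{C}\tilde v_4$ and using $\tilde v_3\top\tilde v_4$ to kill the $\tilde v_3$-contribution gives $P_2(\tilde v_4)(\Phi(v))=\gamma\tilde v_4$, hence $\varphi_{\tilde v_4}(\Phi(v))=\gamma$. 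Therefore $\gamma=0$, the required contradiction, completing the proof.
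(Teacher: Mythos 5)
Your argument is correct, and up to the reduction to a common rank-$\geq 2$ Cartan summand, the identification $\alpha=TTP(\Phi(v),\Phi(e))=0$, and the elimination of the trangle case and the easy quadrangle sub-cases, it follows the same skeleton as the paper's proof. The decisive step --- ruling out $\Phi(v)=\gamma\tilde v_4+\delta\tilde v_3$ with $\gamma,\delta\neq 0$ --- is where you genuinely diverge. The paper pulls the configuration back to $M$ via the linear extension $T_0^{-1}$ of $\Phi^{-1}$, expands $0=\{e,e,v\}$, separates the Peirce-$1$ and Peirce-$2$ components relative to $\Phi^{-1}(\tilde v_3)$, and invokes the non-degeneracy result of \cite[Lemma 1.5]{FriRu85} (or \cite[Theorem 2.3]{Pe2015}) to conclude $e\perp\Phi^{-1}(\tilde v_3)$ and hence $\gamma=0$. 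You instead prove the general lemma that $u\top e$ and $v\perp e$ force $\varphi_u(v)=0$: the Jordan identity specialised at $a=b=u$, $x=y=e$ collapses (since $L(u,u)e=\tfrac12 e$) to $[L(u,u),L(e,e)]=0$, so the Peirce projections of $u$ and $e$ commute and $P_2(u)$ maps $M_0(e)$ into $M_2(u)\cap M_0(e)=\mathbb{C}u\cap M_0(e)=\{0\}$; you then transport $\varphi_{\Phi^{-1}(\tilde v_4)}(v)=0$ to $\varphi_{\tilde v_4}(\Phi(v))=\gamma$ by TTP-preservation. This is sound, and arguably more self-contained: it needs only the Jordan identity and minimality, avoids the positivity lemma, and does not use $T_0^{-1}$ at this stage (though $T_0$ still enters through Proposition \ref{p preservation of collienarity and Peirce-1 subspaces}, which both arguments apply to $\Phi^{-1}$ to convert $\tilde v_4\top\Phi(e)$ into $\Phi^{-1}(\tilde v_4)\top e$). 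The paper's route, by contrast, extracts the extra geometric fact $e\perp\Phi^{-1}(\tilde v_3)$ along the way. Your treatment of the rank-one case is also slightly more direct than the paper's (which decomposes $\Phi(v)$ against an auxiliary Euclidean-orthogonal tripotent and computes $\{e,e,v\}$), but both reduce to the same collinearity-preservation proposition.
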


\begin{proof} We begin by observing that $\Phi^{-1}$ also preserves triple transition pseudo-probabilities. Let  $T_0: soc(M)\to soc(N)$ be the bijection extending $\Phi$ whose existence is given by \cite[Theorem 2.3]{Pe2022TTP}.\smallskip
	
Let us take $e,v\in \mathcal{U}_{min}(M)$ with $e\perp v$. We shall prove that $\Phi(e)\perp \Phi(v)$. By hypotheses, $\displaystyle M= \bigoplus_{i\in \Lambda_1}^{\ell_{\infty}} C_i$ and  $\displaystyle N = \bigoplus_{j\in \Lambda_2}^{\ell_{\infty}} {D}_j,$ where $C_i$ and $D_j$ are Cartan factors.\smallskip

If $\Phi (e)$ and $\Phi(v)$ belong to different Cartan factors $D_{j_1}$ and $D_{j_2}$ with $j_1\neq j_2$, the desired conclusion is clear. We shall therefore assume that $\Phi (e), \Phi(v)\in D_{j_0}.$\smallskip

If $D_{j_0}$ has rank-one, it must be a complex Hilbert space regarded as a type 1 Cartan factor, and both elements $\Phi(e)$ and $\Phi(v)$ are in its unit sphere. If dim$(D_{j_0}) =1$ (i.e. $D_{j_0}= \mathbb{C}$) $\Phi (v) = \mu \Phi (e)$ for some unitary $\mu \in \mathbb{C}$. However, $0 = TTP(e,v)  = TTP ( \Phi (e), \Phi(v)) = \mu$, which is impossible. We can therefore assume that dim$(D_{j_0}) \geq 2,$ and find a third tripotent $\widehat{w}\in D_{j_0}$ (i.e. an element in the unit sphere of $D_{j_0}$) and $(\lambda_1,\lambda_2)\in S_{\ell_2^2}$ such that $\Phi(e) \perp_2 \widehat{w}$ in the Euclidean sense and $\Phi (v) = \lambda_1 \Phi(e) + \lambda_2 \widehat{w}$. By applying $\Phi^{-1}$ and $T_0^{-1}$ we derive that $v = \lambda_1 e + \lambda_2 \Phi^{-1} (\widehat{w})$. Since $\Phi(e) \top \widehat{w}$ in $D_{j_0}$ (and hence in $N$), Proposition \ref{p preservation of collienarity and Peirce-1 subspaces}, applied to $\Phi^{-1}$, implies that $e$ and  $\Phi^{-1}(\widehat{w})$ are collinear, which contradicts the fact that $v\perp e$, because $0=\{e,e,v\} = \{e,e,\lambda_1 e + \lambda_2 \Phi^{-1} (\widehat{w})\} = \lambda_1 e + \frac{\lambda_2}{2} \Phi^{-1} (\widehat{w}),$ and thus $\lambda_1 = \lambda_2=0$. Therefore $D_{j_0}$ must have rank $\geq 2$.\smallskip

Since $D_{j_0}$ is Cartan factor with rank $\geq 2$, Lemma 3.10 in \cite{FerPe18Adv}, applied to $\Phi(e)$ and $\Phi(v)$ in $D_{j_0}$, assures that one of the following statements holds:\begin{enumerate}[$(1)$]\item There exist minimal tripotents $\tilde{v}_2,\tilde{v}_3,\tilde{v}_4$ in $D_{j_0}$ and $\alpha, \beta,\gamma, \delta\in \mathbb{C}$ such that $(\Phi(e),\tilde{v}_2,\tilde{v}_3,\tilde{v}_4)$ is a quadrangle and $\Phi(v) = \alpha \Phi(e) + \beta \tilde{v}_2 + \gamma \tilde{v}_4 + \delta \tilde{v}_3$, $\alpha \delta = \beta \gamma$ and $|\alpha|^2 + |\beta|^2 + |\gamma|^2 +|\delta|^2 =1$;
	\item There exists a rank two tripotent $w\in D_{j_0},$ a minimal tripotent $\tilde{v}\in D_{j_0}$ and $\alpha, \beta, \gamma\in \mathbb{C}$ such that $(\Phi(e), w, \tilde{v})$ is a trangle, $\Phi(v) = \alpha \Phi(e) + \beta w + \delta \tilde{v}$, $\alpha \delta = \beta^2$ and $|\alpha|^2 + 2 |\beta|^2 +|\delta|^2 =1$.
\end{enumerate}

We treat both cases in parallel. Since $\Phi$ preserves triple transition pseudo-probabilities, $0=TTP(e,v)= TTP (\Phi(e),\Phi(v)) = \alpha$. This implies in case $(2)$ that $\beta =0$ and $\Phi(v) = \delta \tilde{v} \perp \Phi(e)$, which gives the desired conclusion.\smallskip

We finally handle case $(1)$. Since $\alpha=0= \beta \gamma$ one of these two scalars is zero. We can assume that $\beta=0$ (the other case is similar). Then $\Phi(v) = \gamma \tilde{v}_4 + \delta \tilde{v}_3$ with $|\gamma|^2 +|\delta|^2 =1$. Since $\tilde{v}_4\top \Phi(e)$ and $\tilde{v}_4\top \tilde{v}_3$, Proposition \ref{p preservation of collienarity and Peirce-1 subspaces} assures that $e \top \Phi^{-1}(\tilde{v}_4)$ and $\Phi^{-1} (\tilde{v}_4)\top \Phi^{-1} (\tilde{v}_3)$. Taking images under $\Phi^{-1}$ and $T_0^{-1}$ we get \begin{equation}\label{eq preimages of the quadrangle by Phi inverse} 0=\{e,e,v\} = \{e,e, \gamma \Phi^{-1} (\tilde{v}_4) + \delta \Phi^{-1}(\tilde{v}_3)\} = \frac{\gamma}{2} \Phi^{-1} (\tilde{v}_4) + \delta\  \{e,e,\Phi^{-1}(\tilde{v}_3)\}.
\end{equation} Let us make a couple of observations. First, by the preservation of triple transition pseudo-probabilities we have
$$0 = TTP (\Phi(e), \tilde{v}_3) = TTP (e , \Phi^{-1} (\tilde{v}_3)),$$ which implies that $P_2(\Phi^{-1} (\tilde{v}_3)) (e) = 0$, and consequently $e =  P_1(\Phi^{-1} (\tilde{v}_3)) (e) + P_0(\Phi^{-1} (\tilde{v}_3)) (e)$. Therefore, by Peirce arithmetic,
$$\begin{aligned}
\{e,e,\Phi^{-1}(\tilde{v}_3)\} &= \{P_0(\Phi^{-1} (\tilde{v}_3)) (e),P_1(\Phi^{-1} (\tilde{v}_3)) (e),\Phi^{-1}(\tilde{v}_3)\} \\
&+ \{P_1(\Phi^{-1} (\tilde{v}_3)) (e),P_1(\Phi^{-1} (\tilde{v}_3)) (e),\Phi^{-1}(\tilde{v}_3)\},	
\end{aligned}$$ where $\{P_0(\Phi^{-1} (\tilde{v}_3)) (e),P_1(\Phi^{-1} (\tilde{v}_3)) (e),\Phi^{-1}(\tilde{v}_3)\} \in M_1 (\Phi^{-1}(\tilde{v}_3)),$ while the second summand
$\{P_1(\Phi^{-1} (\tilde{v}_3)) (e),P_1(\Phi^{-1} (\tilde{v}_3)) (e),\Phi^{-1}(\tilde{v}_3)\} \in M_2 (\Phi^{-1}(\tilde{v}_3)).$ Having in mind that $\Phi^{-1} (\tilde{v}_4)\top \Phi^{-1} (\tilde{v}_3)$, and hence $\Phi^{-1} (\tilde{v}_4)\in M_{1} (\Phi^{-1} (\tilde{v}_3))$, we deduce from \eqref{eq preimages of the quadrangle by Phi inverse} and the Peirce decomposition with respect to $\Phi^{-1} (\tilde{v}_3)$ that \begin{equation}\label{eq last equation in thm} \delta\ \{P_1(\Phi^{-1} (\tilde{v}_3)) (e),P_1(\Phi^{-1} (\tilde{v}_3)) (e),\Phi^{-1}(\tilde{v}_3)\}=0.
\end{equation} If $\delta=0$, it follows from \eqref{eq preimages of the quadrangle by Phi inverse} that $\gamma  =0,$ and hence $\Phi(v) =0$, which is impossible.\smallskip

The other alternative from \eqref{eq last equation in thm} is $\{P_1(\Phi^{-1} (\tilde{v}_3)) (e),P_1(\Phi^{-1} (\tilde{v}_3)) (e),\Phi^{-1}(\tilde{v}_3)\}=0.$ Now, an application of \cite[Lemma 1.5]{FriRu85} or \cite[Theorem 2.3]{Pe2015} gives $P_1(\Phi^{-1} (\tilde{v}_3)) (e)=0$, and thus $e = P_0(\Phi^{-1} (\tilde{v}_3)) (e) \perp \Phi^{-1} (\tilde{v}_3)$. Finally, by the above arguments, we have $$v = \Phi^{-1} (\gamma \tilde{v}_4 + \delta \tilde{v}_3) = T_0^{-1} (\gamma \tilde{v}_4 + \delta \tilde{v}_3) = \gamma \Phi^{-1}(\tilde{v}_4) + \delta \Phi^{-1} (\tilde{v}_3),$$ with $v\perp e$, $e \perp \Phi^{-1} (\tilde{v}_3)$ and $\Phi^{-1} (\tilde{v}_4)\top e$ (compare Proposition \ref{p preservation of collienarity and Peirce-1 subspaces}), it necessarily holds that $\gamma =0,$ and thus $\Phi (v) = \delta \tilde{v}_3 \perp \Phi(e).$
\end{proof}

By combining Theorem \ref{t preservers of ttp also preserve orthog} with \cite[Corollary 2.5]{Pe2022TTP} we appreciate the real impact of our conclusions in the next corollary.

\begin{corollary}\label{c bijections preserving triple transition pseudo-probabilities and orthogonality} Let $\Phi : \mathcal{U}_{min}(M) \to \mathcal{U}_{min}(N)$ be a bijective transformation preserving triple transition pseudo-probabilities {\rm(}i.e. $TTP(\Phi(v),\Phi(e))=\varphi_{\Phi(e)} (\Phi(v)) = \varphi_{e} (v)=TTP(v,e),$ for all $e,v$ in $\mathcal{U}_{min} (M)${\rm)}, where $M$ and $N$ are atomic JBW$^*$-triples. Then $\Phi$ extends {\rm(}uniquely{\rm)} to a surjective complex-linear {\rm(}isometric{\rm)} triple isomorphism from $M$ onto $N$.
\end{corollary}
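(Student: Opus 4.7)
The statement is essentially a packaging corollary, so the plan is to combine the two main ingredients already available: Theorem~\ref{t preservers of ttp also preserve orthog} (just proved) and \cite[Corollary 2.5]{Pe2022TTP} (quoted from earlier work). First I would apply Theorem~\ref{t preservers of ttp also preserve orthog} to the given bijection $\Phi$ to conclude that $\Phi$ preserves orthogonality between minimal tripotents in both directions. At that point $\Phi$ simultaneously preserves triple transition pseudo-probabilities (by hypothesis) and orthogonality (by Theorem~\ref{t preservers of ttp also preserve orthog}), placing it exactly inside the hypotheses of \cite[Corollary 2.5]{Pe2022TTP}.

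Next I would invoke \cite[Corollary 2.5]{Pe2022TTP} directly to produce a surjective complex-linear isometric triple isomorphism $T: M\to N$ whose restriction to $\mathcal{U}_{min}(M)$ coincides with $\Phi$. No further computation is needed here; the extension, its complex linearity, its isometric character, and its compatibility with the triple products are all part of the conclusion of that corollary.

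For uniqueness of the extension, I would argue as follows. Any (complex-)linear triple isomorphism $S:M\to N$ agreeing with $\Phi$ on $\mathcal{U}_{min}(M)$ must agree with $T$ on the linear span of $\mathcal{U}_{min}(M)$, i.e.\ on $\mathrm{soc}(M)$. Since $M$ is an atomic JBW$^*$-triple, $\mathrm{soc}(M)$ is weak$^*$-dense in $M$; because triple isomorphisms between JBW$^*$-triples are weak$^*$-continuous (they are isometries and the predual is unique), $S=T$ on all of $M$.

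I do not anticipate a genuine obstacle in this corollary: the substantial work has already been absorbed into Theorem~\ref{t preservers of ttp also preserve orthog} and into \cite[Corollary 2.5]{Pe2022TTP}. The only point that deserves explicit mention is that, without Theorem~\ref{t preservers of ttp also preserve orthog}, the hypothesis of \cite[Corollary 2.5]{Pe2022TTP} would need to be assumed separately; it is precisely the new theorem of the paper that makes the orthogonality assumption redundant and upgrades the result to a true Wigner-type theorem driven only by preservation of triple transition pseudo-probabilities.
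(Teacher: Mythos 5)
Your proposal is correct and follows exactly the paper's route: the paper proves this corollary simply by combining Theorem~\ref{t preservers of ttp also preserve orthog} with \cite[Corollary 2.5]{Pe2022TTP}. Your extra remark on uniqueness (agreement on $\mathrm{soc}(M)$, weak$^*$-density of the socle in an atomic JBW$^*$-triple, and automatic weak$^*$-continuity of surjective isometries between JBW$^*$-triples via uniqueness of the predual) is a correct and welcome justification of the parenthetical ``uniquely'' that the paper leaves implicit.
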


The next corollary is perhaps in an interesting surprise by itself. 

\begin{corollary}\label{c bijections preserving triple transition pseudo-probabilities are isometries} Let $\Phi : \mathcal{U}_{min}(M) \to \mathcal{U}_{min}(N)$ be a bijective transformation preserving triple transition pseudo-probabilities, where $M$ and $N$ are atomic JBW$^*$-triples. Then $\Phi$ is an isometry with respect to the distances given by the triple norms. 
\end{corollary}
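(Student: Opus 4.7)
The plan is to deduce this as an immediate consequence of the preceding Corollary \ref{c bijections preserving triple transition pseudo-probabilities and orthogonality}. That corollary, obtained by combining Theorem \ref{t preservers of ttp also preserve orthog} with \cite[Corollary 2.5]{Pe2022TTP}, guarantees that $\Phi$ extends uniquely to a surjective complex-linear isometric triple isomorphism $T \colon M \to N$.

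Given this, the argument is essentially one line. Since $T$ is an isometry from $M$ onto $N$ with respect to the JB$^*$-triple norms, one has
\[
\| \Phi(e) - \Phi(v) \|_N \;=\; \| T(e) - T(v) \|_N \;=\; \| T(e-v) \|_N \;=\; \| e - v \|_M
\]
for every pair $e, v \in \mathcal{U}_{min}(M)$, which is precisely the assertion that $\Phi$ preserves the distance induced by the triple norms.

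Since no obstacle arises (the heavy lifting was already done in Theorem \ref{t preservers of ttp also preserve orthog} and in \cite[Corollary 2.5]{Pe2022TTP}), the only thing to make explicit is that the ``gap metric'' alluded to in the introduction is, by definition, the metric inherited from the ambient JB$^*$-triple norm, so the isometric property of $T$ transfers verbatim to its restriction $T|_{\mathcal{U}_{min}(M)} = \Phi$.
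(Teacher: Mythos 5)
Your proposal is correct and is exactly the paper's (implicit) argument: Corollary \ref{c bijections preserving triple transition pseudo-probabilities and orthogonality} already provides a surjective complex-linear isometric triple isomorphism extending $\Phi$, and restricting that isometry to $\mathcal{U}_{min}(M)$ gives the distance preservation at once.
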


All previous results also hold for atomic von Neumann algebras (i.e. $\ell_{\infty}$-sums of $B(H)$ spaces).\smallskip

Let $M$ and $N$ be atomic JBW$^*$-triples. Under the light of Corollary \ref{c bijections preserving triple transition pseudo-probabilities are isometries} above, it is natural to ask whether a bijection $\Phi:  \mathcal{U}_{min}(M) \to \mathcal{U}_{min}(N)$ preserving distances with respect to the triple norms also preserves triple transition pseudo-probabilities. The answer is, in general, negative. The counterexamples presented below points out the different information encoded by the set of minimal tripotents equipped with the triple transition pseudo-probability and the set of minimal projections with the usual transition probability. For example, the natural conjugation on $B(H),$ $a \mapsto a^*$, defines a conjugate-linear (isometric) triple automorphism whose restriction to $\mathcal{U}_{min}(B(H))$, defines a bijection  $\Psi:  \mathcal{U}_{min}(B(H)) \to \mathcal{U}_{min}(B(H))$ which preserves distances, but does not preserve triple transition pseudo-probabilities since $TTP(\Psi(\lambda e),\Psi(e)) =  TTP(\overline{\lambda} e^*,e^*) = \overline{\lambda}$ is not, in general, equal to $TTP(\lambda e,e) =\lambda $ for all $e
\in \mathcal{U}_{min}(B(H))$ and $\lambda\in \mathbb{T}.$  

\begin{remark}\label{r TTP and distance are not mutually determined} The usual operator or C$^*$- norm on $B(H)$ induces a metric on the set $\mathcal{P}_1 (H),$ of all minimal projections in $B(H)$, which is known as the \emph{gap metric}. The gap metric and the transition probability between elements in  $\mathcal{P}_1 (H)$ are mutually determined by the formula \begin{equation}\label{eq gap metric determined by the transition probability} \| p - q \| = \sqrt{ 1 - \hbox{tr} (pq)} = \sqrt{ 1 - TTP (p,q)},
	\end{equation} {\rm(}cf. \cite[$(2.6.13)$ in page 127]{Molnar85}{\rm)}. The distance, or gap metric, between two minimal tripotents $e$ and $v$ in a JB$^*$-triple $E$ was determined in \cite[Proposition 3.3]{FerPe18Adv} and can be computed with the following formula:
	\begin{equation}\label{eq formula for the gap metric between minimal tripotents} \|e-v\|^2 = (1-\Re\hbox{e} TTP(v,e)) + \sqrt{(1-\Re\hbox{e} TTP(v,e))^2 -\|P_0(e) (v)\|^2}.
	\end{equation} It does not take too much time to check that \eqref{eq formula for the gap metric between minimal tripotents} coincides with the formula in \eqref{eq gap metric determined by the transition probability} when $e$ and $v$ are minimal projections {\rm(}i.e. positive minimal partial isometries{\rm)} in $B(H)$. To illustrate the statement that the gap metric and the triple transition pseudo-probability are not mutually determined with a concrete example, consider the tripotents $e = \left( \begin{matrix}
		1 & 0 \\
		0 & 0
	\end{matrix}\right),$ $v = \left( \begin{matrix}
		1/3 & 1/3 \\
		\sqrt{{7}/{18}} & \sqrt{{7}/{18}}
	\end{matrix}\right)$ and $\tilde{v} = \left( \begin{matrix}
		1/3 & 1/4 \\
		{\sqrt{119}}/{15} & {\sqrt{119}}/{20}
	\end{matrix}\right)$ in $M_2(\mathbb{C})$. It is easy to check that $TTP(v,e) = TTP(\tilde{v},e) = 1/3$ while $\|e-v\| \neq \|e- \tilde{v}\|$. On the other hand, taking $\gamma,\beta\in \mathbb{R}$ such that $\gamma \beta = 1/2  \left({\sqrt{3-\sqrt{2}}}\right)/(3 \sqrt{2})$, $1/4 + \beta^2 +\gamma^2 + {{(3-\sqrt{2})}}/{18} =1$, the element $u = \left( \begin{matrix}
		1/2 &  \beta\\
		\gamma & \left({\sqrt{3-\sqrt{2}}}\right)/(3 \sqrt{2})
	\end{matrix}\right)$ is a minimal tripotent satisfying $\|e-v\|^2 = \frac{1+ 2 \sqrt{2}}{3 \sqrt{2}} = \|e -u\|^2$ while $TTP(v,e) = 1/3 \neq 1/2 = TTP(u,e)$.
\end{remark}

The previous discussion naturally leads to the study of surjective isometries between sets of minimal tripotents in two atomic JBW$^*$-triples. We are therefore connected with the celebrated Tingley's problem in the case of atomic JBW$^*$-triples \cite{FerPe17c,FerPe18}. The main result in \cite{FerPe17c} shows that every surjective isometry  $\Delta$ between the unit spheres of two atomic JBW$^*$-triples $M$ and $N$ admits a extension to a real linear triple isomorphism between the JB$^*$-triples. Clearly, the set of minimal tripotents in a JB$^*$-triple $E$ is contained in the unit sphere of $E$. In the case of a complex Hilbert space, regarded as a type 1 Cartan factor, the set of all minimal tripotents is precisely the whole sphere.  One of the key facts in the just commented result from \cite{FerPe17c} consists in proving that such an isometry $\Delta$ maps $\mathcal{U}_{min}(M)$ to $\mathcal{U}_{min}(N)$. In our next result this will be part of the hypothesis but at the cost of reducing the domain of our bijection.\smallskip

We recall some terminology first. Following \cite{FerMarPe2012}, the set of all \emph{contractive perturbations} of a subset $S$ of the closed unit ball of a Banach space $X$ is defined as the 
norm-closed convex subset of $\mathcal{B}_{X}$ given by $$\hbox{\rm cp}(S) =\{ x\in X : \|x \pm s \| \leq 1 \}.$$ For each natural $n\geq 2$, the $n$-th contractive perturbations of $S$ are inductively defined by the equality $\hbox{\rm cp}^{(n)} (S) = \hbox{\rm cp}(\hbox{\rm	cp}^{(n-1)} (S))$. It is known that $S\subseteq \hbox{\rm cp}^{(2)} (S)$, which gives $\hbox{\rm cp} (S)= \hbox{\rm cp}^{(3)} (S).$\smallskip
  
One of the basic tools in our previous arguments is provided by \cite[Lemma 3.10]{FerPe18Adv}, a result which describes the relative position of two minimal tripotents in a Cartan factor of rank $\geq 2$. We shall next state an analogous result for rank-one Cartan factors, which has been employed before. The statement is probably part of the folklore in JB$^*$-triple theory and the proof is clear. 

\begin{lemma}\label{l lemma 3.10 in advances for rank-one Cartan factors} Let $e,v$ be two minimal tripotents in a rank-one Cartan factor $C$ with dimension $\geq 2$. Then there exists another minimal tripotent $v_1$ in $C$ and $\alpha, \beta\in \mathbb{C}$ satisfying $$e\top v_1, \ |\alpha|^2+ |\beta|^2 =1, \hbox{ and } v =  \alpha e + \beta v_1.$$    \end{lemma}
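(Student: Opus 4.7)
The plan is to reduce the statement to standard Hilbert space geometry. Every rank-one Cartan factor $C$ is {\rm(}isometrically, and as a JB$^*$-triple{\rm)} a complex Hilbert space $H$ regarded as a type $1$ Cartan factor $B(\mathbb{C},H)$, as is recalled in the paragraph right before the statement. Under this identification the triple product becomes $\{x,y,z\} = \frac12\bigl(\langle x,y\rangle z + \langle z,y\rangle x\bigr)$, the minimal tripotents are exactly the norm-one elements of $H$, and for any unit vector $e$ a direct computation gives
\[
L(e,e)\,x \;=\; \tfrac12\bigl(x + \langle x,e\rangle e\bigr).
\]
From this one reads off the Peirce decomposition $C_2(e)=\mathbb{C} e$, $C_1(e)=\{e\}^{\perp}$, $C_0(e)=\{0\}$; in particular, two minimal tripotents $e,v_1\in C$ are collinear {\rm(}$e\top v_1${\rm)} if and only if they are Euclidean-orthogonal in $H$.

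With this dictionary in hand I would simply project $v$ onto $\mathbb{C} e$. Set $\alpha := \langle v,e\rangle$ and $w := v - \alpha e \in \{e\}^{\perp}$, so that $v=\alpha e + w$ and, by the Pythagorean identity, $|\alpha|^{2} + \|w\|^{2} = \|v\|^{2} = 1$. If $w=0$, then $|\alpha|=1$ and, since $\dim C =\dim H \geq 2$, I pick any unit vector $v_1\in H$ with $v_1\perp e$ and take $\beta=0$; the required decomposition holds trivially. If $w\neq 0$, I take $\beta := \|w\|>0$ and $v_1 := w/\|w\|$, obtaining a minimal tripotent $v_1$ with $e\top v_1$ and $v = \alpha e + \beta v_1$, $|\alpha|^{2}+|\beta|^{2}=1$.

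Since the whole argument amounts to the Gram--Schmidt step applied to the pair $\{e,v\}$ inside $H$, there is no genuine obstacle; the only point worth being careful about is the translation between triple-theoretic collinearity in $C$ and Euclidean orthogonality in $H$, which the Peirce calculation above makes transparent. One may also contrast this with the rank $\geq 2$ situation of \cite[Lemma 3.10]{FerPe18Adv}, where the possible relative positions of two minimal tripotents genuinely require triple-theoretic configurations {\rm(}quadrangles or trangles{\rm)} rather than mere Hilbertian orthogonality.
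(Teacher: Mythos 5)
Your proof is correct and is essentially the argument the paper has in mind: the paper gives no proof of this lemma at all (it is dismissed as folklore whose ``proof is clear''), and your dictionary --- rank-one Cartan factor $=$ Hilbert space, minimal tripotents $=$ unit vectors, collinearity $=$ Euclidean orthogonality --- is exactly the identification the author uses in the rank-one cases of Proposition \ref{p preservation of collienarity and Peirce-1 subspaces} and Theorem \ref{t preservers of ttp also preserve orthog}. The Gram--Schmidt step then settles the claim, with the $\dim \geq 2$ hypothesis correctly invoked only in the degenerate case $v\in\mathbb{T}e$.
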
 


\begin{remark}\label{r surjective real linear isometries} Each surjective real linear isometry between two Cartan factors of rank $\geq 2$ must be either complex linear or conjugate linear and a triple isomorphism {\rm(}cf. \cite{Dang92}{\rm)}. A similar conclusion is, in general, false for rank-one Cartan factors. Namely, the mapping $T_0:\ell_2^2\to \ell_2^2,$ $T_0( \lambda_1, \lambda_2) = ( \lambda_1, \overline{\lambda_2})$ is a surjective real linear isometry which is not complex linear nor conjugate linear and does not preserve triple products. Let us see that this covers all possible possibilities.  Suppose $T: H\to H$ is a surjective real linear isometry between to rank-one Cartan factors {\rm(}i.e. two complex Hilbert spaces which are clearly identified{\rm)}. Let $\{e_j : \Lambda\}$ be an orthonormal basis of the Hilbert space $H$. It is easy to check that each $e_j$ defines a minimal and maximal tripotent in $H$ and they are all mutually collinear. It is also clear that $T$ preserves cubes and collinearity {\rm(}i.e. Euclidean orthogonality{\rm,)}. Therefore $\{T(e_j)\}$ is an orthonormal basis of $H$ too, and the equation $\|T(e_j) -T(i e_j) \|^2 = 2$ then implies that $T(i e_j) \in \{\pm i T(e_j)\}$ for all $j$. Setting $\Lambda_1:=\{j : T(i e_j) =i T(e_j) \},$ $\Lambda_2:=\{j : T(i e_j) =- i T(e_j) \},$ and $H_k =\overline{\hbox{span}}\{e_j: j\in \Lambda_k\},$ we have $H = H_1\oplus_2^{\perp_2} H_2$, $T|_{H_1}: H_1\to T(H_1)$ is a complex linear surjective isometry and $T|_{H_2}: H_2\to T(H_2)$ is a conjugate linear surjective isometry. Furthermore, the natural conjugation $j$ on $H$ defined by $j (k_1,k_2) :=  (k_1,\overline{k_2})$ {\rm(}$(k_1,k_2)\in T(H_1)\oplus_2^{\perp_2} T(H_2)${\rm)} satisfies that $j\circ T : H\to H$ is a complex linear isometry and a triple isomorphism. 
\end{remark}

Our next result determines the form of all surjective isometries between the sets of minimal tripotents in two atomic JBW$^*$-triples. 

\begin{theorem}\label{t Tingley for minimal in atomic JBW-triples} Let $\Delta: \mathcal{U}_{min}(M) \to \mathcal{U}_{min}(N)$ be a surjective isometry, where $M$ and $N$ are atomic JBW$^*$-triples. Then there exists a {\rm(}unique{\rm)} real linear isometry $T:M\to N$ such that $\Delta=T|_{\mathcal{U}_{min}(M)}$.
\end{theorem}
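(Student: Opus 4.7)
My plan is to reduce the problem to a pair of single Cartan factors, to deduce preservation of triple transition pseudo-probabilities up to a fixed conjugation, and then to invoke Corollary \ref{c bijections preserving triple transition pseudo-probabilities and orthogonality}; rank-one summands will be treated separately using the classical solution of Tingley's problem for Hilbert spaces. The reduction proceeds from the observation that each minimal tripotent of $M=\bigoplus_{i\in\Lambda_1}^{\ell_\infty} C_i$ lies entirely in a single summand, because its Peirce-$2$ space is one-dimensional. Minimal tripotents in distinct summands are orthogonal, hence at gap distance exactly $1$, while each $\mathcal{U}_{min}(C_i)$ is path-connected via the parametrisations recorded in \cite[Lemma 3.10]{FerPe18Adv} and Lemma \ref{l lemma 3.10 in advances for rank-one Cartan factors}. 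Since $\Delta$ is a homeomorphism, it carries each path component $\mathcal{U}_{min}(C_i)$ onto some $\mathcal{U}_{min}(D_{\sigma(i)})$, and the $\ell_\infty$-direct sum of the individually produced extensions will furnish $T$.

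Fix a pair $(C,D)=(C_i,D_{\sigma(i)})$ of rank at least two. Formula \eqref{eq formula for the gap metric between minimal tripotents} gives $\|e-v\|=2\Leftrightarrow v=-e$, so $\Delta_i(-e)=-\Delta_i(e)$ for every $e$, and therefore $\|e+v\|=\|\Delta_i(e)+\Delta_i(v)\|$ for all minimal tripotents $e,v$. An elementary manipulation of \eqref{eq formula for the gap metric between minimal tripotents} then yields the equivalence $\|e-v\|^2+\|e+v\|^2=4\Leftrightarrow v\in\mathbb{T}e$, valid here because $P_0(e)$ is non-trivial in rank at least two. Hence $\Delta_i(\mathbb{T}e)=\mathbb{T}\Delta_i(e)$; the restriction of $\Delta_i$ to the circle $\mathbb{T}e$ is a distance-preserving bijection onto $\mathbb{T}\Delta_i(e)$ fixing $\Delta_i(e)$, so there exists $\epsilon_e\in\{\pm 1\}$ with $\Delta_i(\mu e)=\mu^{\epsilon_e}\Delta_i(e)$ for every $\mu\in\mathbb{T}$. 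Substituting this back into \eqref{eq formula for the gap metric between minimal tripotents} applied to $\|\mu e-v\|=\|\mu^{\epsilon_e}\Delta_i(e)-\Delta_i(v)\|$ and matching the $\mu$-dependence on both sides identifies $TTP(\Delta_i(v),\Delta_i(e))$ with $TTP(v,e)$ when $\epsilon_e=+1$ and with its complex conjugate when $\epsilon_e=-1$. Combining this with the symmetry identity $TTP(e,v)=\overline{TTP(v,e)}$ and selecting a pair $e_1,e_2\in\mathcal{U}_{min}(C)$ with non-real $TTP(e_2,e_1)$ (which exists in any rank at least two Cartan factor) forces $\epsilon_{e_1}=\epsilon_{e_2}$, so $\epsilon$ is a single global constant on $\mathcal{U}_{min}(C)$. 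If that constant is $+1$, Corollary \ref{c bijections preserving triple transition pseudo-probabilities and orthogonality} extends $\Delta_i$ to a complex linear triple isomorphism; if it is $-1$, pre-composition with a conjugate linear triple automorphism of $D$ (always available in a rank at least two Cartan factor, see Remark \ref{r surjective real linear isometries}) reduces to the previous case and yields a conjugate linear extension. For a rank-one summand, $\mathcal{U}_{min}(C_i)$ coincides with the unit sphere of the underlying Hilbert space, so the classical Tingley theorem for Hilbert spaces (used in \cite{FerPe17c}) supplies the real linear extension directly, with structure as described in Remark \ref{r surjective real linear isometries}.

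The principal technical obstacle lies in extracting the equality $TTP(\Delta_i(v),\Delta_i(e))=TTP(v,e)$ (respectively, its complex conjugate) when $\epsilon_e=+1$ (respectively, $\epsilon_e=-1$) from the one-parameter family of distance identities $\|\mu e-v\|=\|\mu^{\epsilon_e}\Delta_i(e)-\Delta_i(v)\|$: this requires a careful bookkeeping of the real and imaginary parts of $TTP(v,e)$ against formula \eqref{eq formula for the gap metric between minimal tripotents} as $\mu$ ranges over $\mathbb{T}$. Once this identification is secured, the global constancy of $\epsilon_e$ on a rank at least two Cartan factor follows cleanly from the symmetry of $TTP$, and the $\ell_\infty$-direct sum assembles the per-summand extensions into the sought real linear isometry $T$; uniqueness is immediate because the norm-closed real linear span of $\mathcal{U}_{min}(M)$ is all of $M$.
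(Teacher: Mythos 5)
The decisive step of your argument --- that $\Delta_i(\mathbb{T}e)=\mathbb{T}\Delta_i(e)$ because $\|e-v\|^2+\|e+v\|^2=4$ if and only if $v\in\mathbb{T}e$ --- rests on a false equivalence, and this breaks the chain leading to the identification of the triple transition pseudo-probabilities. Writing $t=\Re\hbox{e}\, TTP(v,e)$ and $p=\|P_0(e)(v)\|^2$, formula \eqref{eq formula for the gap metric between minimal tripotents} gives $\|e-v\|^2+\|e+v\|^2 = 2+\sqrt{(1-t)^2-p}+\sqrt{(1+t)^2-p}\le 2+(1-t)+(1+t)=4$, with equality precisely when $p=0$. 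So your identity characterises $P_0(e)(v)=0$, not $v\in\mathbb{T}e$: in $M_2(\mathbb{C})$ take $e=E_{11}$ and $v=\tfrac{1}{\sqrt2}(E_{11}+E_{12})$; then $P_0(e)(v)=0$ and the sum of squares equals $4$, yet $v\notin\mathbb{T}e$. Consequently you have not shown $\Delta_i(ie)\in\{\pm i\Delta_i(e)\}$, and everything downstream (the extraction of $TTP$ up to conjugation from the family $\|\mu e-v\|$, and the sign dichotomy) is unsupported. The paper reaches $\Delta(ie)\in\{\pm i\Delta(e)\}$ by a different mechanism: it first proves that $\Delta$ preserves orthogonality in both directions (via contractive perturbations, using $\|e\pm v\|=1$ and $\hbox{\rm cp}(\{\Delta(e)\})=\mathcal{B}_{N_0(\Delta(e))}$), deduces that $\Delta(e)$ and $\Delta(ie)$ have the same orthogonal complement among minimal tripotents and hence in $D_{\sigma(i)}$, and then uses $\{\Delta(e)\}^{\perp\perp}\cap D_{\sigma(i)}=\mathbb{C}\Delta(e)$ together with $\|e-ie\|=\sqrt2$. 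Your proposal omits the orthogonality-preservation step altogether, so this route is not available to you as written.

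A second, smaller gap: exhibiting one pair $e_1,e_2$ with $TTP(e_2,e_1)\notin\mathbb{R}$ only yields $\epsilon_{e_1}=\epsilon_{e_2}$ for that particular pair; it does not make $\epsilon$ globally constant on $\mathcal{U}_{min}(C_i)$, because the symmetry argument is vacuous for pairs with real (in particular zero) transition pseudo-probability. You would need a chaining argument through pairs with non-real $TTP$, or the paper's explicit contradiction built from the auxiliary minimal tripotents $u=\tfrac{1}{\sqrt2}(e+v_2)$ and $\tilde u=\tfrac12(e+v_2+v_3+v_4)$. (Also, uniqueness of $T$ cannot be justified by norm-density of the real span of $\mathcal{U}_{min}(M)$: in $B(H)$ that norm closure is only the ideal of compact operators.) By contrast, your reduction to single summands via path components and the use of Ding's theorem on the rank-one summands are sound and essentially parallel to the paper's treatment.
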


\begin{proof} The proof will be obtained by adequate adaptations of tools developed in the study of Tingley's problem for JB$^*$-triples (cf. \cite{PeTan19, FerPe18Adv, FerPe17c, FerPe18}) and the new conclusion in Corollary \ref{c bijections preserving triple transition pseudo-probabilities and orthogonality}. \smallskip
	
Let us begin by proving that $\Delta$ maps antipodal points to antipodal points, that is, \begin{equation}\label{eq Delta preserves antipodal} \Delta(-e) = -\Delta(e) \hbox{ for all } e\in \mathcal{U}_{min}(M).
\end{equation} Namely, since by hypothesis we have $\|\Delta(e) - \Delta(-e) \| = \|e - (-e)\| =2$, Proposition 2.2 in \cite{FerPe18Adv} assures that $$\Delta(-e) = -\Delta(e) + P_0(\Delta(e)) (\Delta(-e)).$$ However, the fact that $\Delta(-e)$ is a minimal tripotent implies that $\Delta(-e) = -\Delta(e)$ as desired.\smallskip

We shall next show that $\Delta$ preserves orthogonality among minimal tripotents in both directions, concretely,  \begin{equation}\label{eq Delta preserves orthogonality isometry} e\perp v \hbox{ in } \mathcal{U}_{min}(M) \Leftrightarrow \Delta(e) \perp \Delta(v) \hbox{ in } \mathcal{U}_{min}(N).
\end{equation} Let us take $e,v\in \mathcal{U}_{min}(M)$ with $e\perp v$. In this case, $\|e \pm v\| = 1,$ by orthogonality, and thus $\|\Delta(e) 
\pm \Delta(v)\| =1$ (cf. \eqref{eq Delta preserves antipodal}), assuring that $\Delta(v) \in cp(\Delta(e))$. As shown in \cite[$(6)$ in page 360]{FerMarPe2012}, $cp(\{\Delta(e)\}) = \{y\in \mathcal{B}_{N} : y
\perp \Delta(e) \} = \mathcal{B}_{N_0 (\Delta(e))}$, which proves that $\Delta(v)\in \mathcal{B}_{N_0 (\Delta(e))},$ and hence $\Delta(e)\perp \Delta(v)$. \smallskip

In the sequel, we shall apply that $M$ and $N$ are atomic JBW$^*$-triples, and hence we can write $\displaystyle M= \bigoplus_{i\in \Lambda_1}^{\ell_{\infty}} C_i$ and  $\displaystyle N = \bigoplus_{j\in \Lambda_2}^{\ell_{\infty}} {D}_j,$ where $C_i$ and $D_j$ are Cartan factors. Let us comment a basic fact. Each minimal tripotent in $M$ and in $N$ lies in a single summand of the corresponding decomposition. Therefore, $e\not\perp v$ in $\mathcal{U} (M)_{min}$ implies that $e$ and $v$ belong to the same Cartan factor $C_{i_0}$, and by \eqref{eq Delta preserves orthogonality isometry} $\Delta(e),\Delta(v)$ are contained in the same Cartan factor $D_{j_0}$. \smallskip

Our next goal consists in proving that for each $i\in \Lambda_1$ there exists a unique $\sigma(i)\in \Lambda_2$ such that $\Delta (\mathcal{U}_{min}(C_{i})) = \mathcal{U}_{min} (D_{\sigma(i)})$, and both summands $C_{i}$ and $D_{\sigma(i)}$ have the same rank. Namely, fix any $e\in \mathcal{U}_{min}(C_{i})$ and pick $\sigma(i)\in \Lambda_2$ such that $\Delta(e)\in \mathcal{U}_{min} (D_{\sigma(i)})$. Given any other $v\in \mathcal{U}_{min}(C_{i})$, by \cite[Lemma 3.10]{FerPe18Adv} and Lemma \ref{l lemma 3.10 in advances for rank-one Cartan factors} there exists $w\in \mathcal{U}_{min}(C_{i})$ such that $w\not\perp e,v$. It follows from the above comments that $\Delta(e),\Delta(v)$ and $\Delta(w)$ all lie in the same factor of the decomposition of $N$, therefore $\Delta(v)\in  \mathcal{U}_{min} (D_{\sigma(i)})$. This proves that $\Delta (\mathcal{U}_{min}(C_{i}))\subseteq  \mathcal{U}_{min} (D_{\sigma(i)})$, and the equality follows from the same argument applied to $\Delta^{-1}$. The rest is clear from the bijectivity of $\Delta $ and \eqref{eq Delta preserves orthogonality isometry}. \smallskip

Pick $i\in \Lambda_1$ such that $C_{i}$ and $D_{\sigma(i)}$ have rank-one. In this case $\Delta|_{\mathcal{U}_{min}(C_{i})} : \mathcal{U}_{min}(C_{i})\to  \mathcal{U}_{min} (D_{\sigma(i)})$ is a surjective isometry, and the assumption concerning the rank implies that $\mathcal{U}_{min}(C_{i}) = S_{C_{i}}$ and $\mathcal{U}_{min} (D_{\sigma(i)}) = S_{D_{\sigma(i)}}$. We can therefore apply Ding's solution to Tingley's problem for Hilbert spaces \cite[Theorem 2.2]{Ding2002} to deduce the existence of a surjective real linear isometry $T_i: C_{i}\to D_{\sigma(i)}$ satisfying $T_i (e) = \Delta (e)$ for all $e\in \mathcal{U}_{min}(C_{i}) = S_{C_{i}}$. Although, $T_i$ need not be complex linear nor conjugate linear (cf. Remark \ref{r surjective real linear isometries}), we can find a conjugation $j_i$ on $D_{\sigma(i)}$ such that $j_i \circ T_i : C_{i}\to D_{\sigma(i)}$ is an isometric (linear) triple isomorphism.\label{eq conjugation on rank-one} This concludes the discussion for rank-one Cartan factors in the decomposition of $M$.\smallskip
	
In the following we shall focus in the case in which $C_i$ and $ D_{\sigma(i)}$ are Cartan factors with rank $\geq 2$ and $\Delta|_{\mathcal{U}_{min}(C_{i})} : \mathcal{U}_{min}(C_{i})\to  \mathcal{U}_{min} (D_{\sigma(i)})$ is a surjective isometry. \smallskip

Let us next prove that \begin{equation}\label{eq Delta i times minimal} \Delta(i e) \in \{\pm i\Delta(e)\}, \hbox{ for all } e\in \mathcal{U}_{min}(C_{i}).
\end{equation} Namely, pick a minimal tripotent $\tilde{v}= \Delta(v)\perp \Delta(e)$ in  $\mathcal{U}_{min} (D_{\sigma(i)})$. By \eqref{eq Delta preserves orthogonality isometry} $v\perp e,$ equivalently, $v\perp i e,$ and thus $\tilde{v}= \Delta(v)\perp \Delta(i e)$ (cf. \eqref{eq Delta preserves orthogonality isometry}). It follows that $$\begin{aligned} & \{\Delta(e)\}^{\perp} \cap \mathcal{U}_{min} (D_{\sigma(i)})  =\{ \tilde{v}\in \mathcal{U}_{min} (D_{\sigma(i)}) : \tilde{v}\perp \Delta(e)\} \\ &= \{\Delta(i e)\}^{\perp} \cap \mathcal{U}_{min} (D_{\sigma(i)}) =\{ \tilde{v}\in \mathcal{U}_{min} (D_{\sigma(i)}) : \tilde{v}\perp \Delta(i e)\}. 
\end{aligned}$$ 

Since the linear combinations of minimal tripotents in the orthogonal complement of $\Delta (e)$ in $D_{\sigma(i)}$ are weak$^*$ dense in this orthogonal complement, we deduce from the above that $$\begin{aligned} & \{\Delta(e)\}^{\perp} \cap D_{\sigma(i)}  =\{ \tilde{z}\in D_{\sigma(i)} : \tilde{z}\perp \Delta(e)\} \\ &= \{\Delta(i e)\}^{\perp} \cap D_{\sigma(i)} =\{ \tilde{z}\in D_{\sigma(i)} : \tilde{z}\perp \Delta(i e)\}. 
\end{aligned}$$ Consequently, $$\Delta(i e)\in \{\Delta(e)\}^{\perp\perp } \cap D_{\sigma(i)}  =\{ \tilde{z}\in D_{\sigma(i)} : \tilde{z} \perp \{\Delta(e)\}^{\perp}  \}.$$ Since $\Delta(e)$ is a minimal tripotent in a Cartan factor with rank $\geq 2$, it cannot be complete, and hence $\{\Delta(e)\}^{\perp\perp } \cap D_{\sigma(i)}  = \left(D_{\sigma(i)}\right)_2(\Delta(e))  = \mathbb{C} \Delta(e)$ (cf. \cite[Remark 3.4]{BurGarPe11}). We can therefore find a unitary $\mu\in \mathbb{T}$ such that $\Delta(i e) = \mu \Delta(e)$. By applying that $\Delta$ is an isometry we get $$|1- \mu| = \|\Delta(e) - \Delta(i e)\| = \|e - ie \| = \sqrt{2},$$ which gives $\mu = \pm i,$ and concludes the proof of \eqref{eq Delta i times minimal}.\smallskip

Building upon \eqref{eq Delta i times minimal} we can now prove the key step in the proof. Let $e\in \mathcal{U}_{min}(C_{i}),$ then one, and precisely one, of the following statements holds: \begin{enumerate}
	\item[$({\color{red}14.a})$] $\Delta( i e ) = i \Delta (e)$, $TTP(v,e) = TTP(\Delta(v),\Delta(e))$ and $\Delta( i v ) = i \Delta (v)$ for all $v\in \mathcal{U}_{min}(C_{i})$.
	\item[$({\color{red}14.b})$] $\Delta( i e ) = - i \Delta (e)$, $TTP(v,e) = \overline{TTP(\Delta(v),\Delta(e))}$ and $\Delta( i v ) = - i \Delta (v)$ for all $v\in \mathcal{U}_{min}(C_{i})$.
\end{enumerate} Fix arbitrary elements $e,v\in \mathcal{U}_{min}(C_{i}).$ The proof relies on the relative position of the minimal tripotents $e$, $v$ and their images. By \cite[Lemma 3.10]{FerPe18Adv} applied to $e,v$ in $C_i$ and their images in $D_{\sigma(i)}$ one of the statements in the following two couples holds:\smallskip\medskip

\(
\left\{ \quad
\begin{minipage}[c]{0.93\linewidth}
	\item[\textbf{(1)}] There exist minimal tripotents $v_2,v_3,v_4$ in $C_{i}$ and complex numbers $\alpha$, $\beta$, $\gamma$, $\delta$ such that $(e,v_2,v_3,v_4)$ is a quadrangle, $|\alpha|^2 +| \beta|^2 + |\gamma|^2 + |\delta|^2 =1$, $\alpha \delta  = \beta \gamma$, and $v = \alpha e + \beta v_2 + \gamma v_4 + \delta v_3$;
	\item[\textbf{(2)}]  There exist a minimal tripotent $\tilde v\in C_{i}$, a rank two tripotent $u\in C_{i}$, and complex numbers $\alpha, \beta, \delta$ such that $(e, u,\tilde v)$ is a trangle, $|\alpha|^2 +2 | \beta|^2 + |\delta|^2 =1$, $\alpha \delta  = (\beta)^2$, and $v = \alpha e + \beta u +\delta \tilde v$.
\end{minipage}
\right.
\)\smallskip\medskip

\(
\left\{ \quad
\begin{minipage}[c]{0.93\linewidth}
	\item[\textbf{(1')}] There exist minimal tripotents $w_2,w_3,w_4$ in $D_{\sigma(i)}$, and complex numbers $\alpha'$, $\beta'$, $\gamma'$, $\delta'$ such that $(\Delta(e),w_2,w_3,w_4)$ is a quadrangle, $|\alpha'|^2 +| \beta'|^2 + |\gamma'|^2 + |\delta'|^2 =1$, $\alpha^{\prime} \delta^{\prime}  = \beta^{\prime} \gamma^{\prime}$, and $\Delta(v) = \alpha' \Delta (e) + \beta' w_2 + \gamma^{\prime} w_4 + \delta^{\prime} w_3$;
	\item[\textbf{(2')}] There exist a minimal tripotent $w\in D_{\sigma(i)}$, a rank two tripotent $\tilde{u}\in D_{\sigma(i)}$, and complex numbers $\alpha^{\prime}, \beta^{\prime}, \delta^{\prime}$ such that $(\Delta(e), \tilde{u}, w)$ is a trangle, $|\alpha^{\prime}|^2 +2 | \beta^{\prime}|^2 + |\delta^{\prime}|^2 =1$, $\alpha^{\prime} \delta^{\prime}  = (\beta^{\prime})^2$, and $\Delta(v) = \alpha^{\prime} \Delta(e)+ \beta^{\prime} \tilde{u} +\delta^{\prime} w$.
\end{minipage}
\right.
\)\smallskip\medskip

\textbf{Case 1:} $\Delta( i e ) = i \Delta (e)$.\smallskip

Assume first that \textbf{(1)} and \textbf{(1')} hold. By the formula measuring the distance between minimal tripotents \eqref{eq formula for the gap metric between minimal tripotents} (cf. \cite[Proposition 3.3]{FerPe18Adv}), the hypothesis on $\Delta$ and \eqref{eq Delta preserves antipodal} we have $${(1 \pm \Re\hbox{e} (\alpha))+ \sqrt{(1 \pm  \Re\hbox{e} (\alpha))^2 -  |\delta|^2}}= \|e \pm v \|^2$$ $$= \|\Delta(e) \pm \Delta(v) \|^2= {(1 \pm \Re\hbox{e} (\alpha'))+ \sqrt{(1 \pm  \Re\hbox{e} (\alpha'))^2 -  |\delta'|^2}},$$ equivalently, \begin{equation}
	\label{eq system 1} (\mp \Re\hbox{e} (\alpha') \pm \Re\hbox{e} (\alpha)) + \sqrt{(1 \pm  \Re\hbox{e} (\alpha))^2 -  |\delta|^2}= \sqrt{(1 \pm   \Re\hbox{e} (\alpha'))^2 -  |\delta'|^2}.
\end{equation} By squaring both terms in the equations and subtracting the resulting identities we get $$ 2(\Re\hbox{e} (\alpha') - \Re\hbox{e} (\alpha))\left(2 + \sqrt{(1 -  \Re\hbox{e} (\alpha))^2 -  |\delta|^2} + \sqrt{(1 +  \Re\hbox{e} (\alpha))^2 -  |\delta|^2}  \right) =0,$$ which implies that $\Re\hbox{e} (\alpha) = \Re\hbox{e} (\alpha')$.\smallskip

Now, by applying that $\Delta( i e ) = i \Delta (e)$ and repeating the above arguments we have $$\begin{aligned} &{(1 \pm \Im\hbox{m} (\alpha))+ \sqrt{(1 \pm  \Im\hbox{m} (\alpha))^2 -  |\delta|^2}}= \| i e \pm v \|^2 = \|\Delta(i e) \pm \Delta(v) \|^2 \\ 
&= \|i \Delta( e) \pm \Delta(v) \|^2= {(1 \pm \Im\hbox{m} (\alpha'))+ \sqrt{(1 \pm  \Im\hbox{m} (\alpha'))^2 -  |\delta'|^2}},
\end{aligned}$$ leading to $\Im\hbox{m} (\alpha) =  \Im\hbox{m} (\alpha')$, and hence $\alpha = \alpha'$. We have therefore proved that $$TTP(v,e) = \alpha = \alpha' = TTP(\Delta(v),\Delta(e))$$ as desired.\smallskip

The arguments in cases \textbf{(1)} and \textbf{(2')}, \textbf{(2)} and \textbf{(1')}, and \textbf{(2)} and \textbf{(2')} are exactly the same, or even particular cases, and all lead to $TTP(v,e) = \alpha = \alpha' = TTP(\Delta(v),\Delta(e))$. This concludes the proof of the first statement in $({\color{red}14.a})$.\smallskip

\textbf{Case 2:} $\Delta( i e ) = - i \Delta (e)$.\smallskip

Assuming that \textbf{(1)} and \textbf{(1')} hold we have $${(1 \pm \Re\hbox{e} (\alpha))+ \sqrt{(1 \pm  \Re\hbox{e} (\alpha))^2 -  |\delta|^2}}= \|e \pm v \|^2$$ $$= \|\Delta(e) \pm \Delta(v) \|^2= {(1 \pm \Re\hbox{e} (\alpha'))+ \sqrt{(1 \pm  \Re\hbox{e} (\alpha'))^2 -  |\delta'|^2}},$$ and $$\begin{aligned} &{(1 \pm \Im\hbox{m} (\alpha))+ \sqrt{(1 \pm  \Im\hbox{m} (\alpha))^2 -  |\delta|^2}}= \| i e \pm v \|^2 = \|\Delta(i e) \pm \Delta(v) \|^2 \\ 
	&= \|- i \Delta( e) \pm \Delta(v) \|^2= {(1 \mp \Im\hbox{m} (\alpha'))+ \sqrt{(1 \mp  \Im\hbox{m} (\alpha'))^2 -  |\delta'|^2}},
\end{aligned},$$ equations which combined give $TTP(v,e) = \alpha = \overline{\alpha'} = \overline{TTP(\Delta(v),\Delta(e))}$.  The other possible cases can be similarly treated, and all together prove the first statement in $({\color{red}14.b})$.\smallskip

Let us now prove the final claims in $({\color{red}14.a})$ and  $({\color{red}14.b})$. Suppose on the contrary that we can find $e,v\in \mathcal{U}_{min}(C_{i})$ such that $\Delta(i e) = i \Delta(e)$ and $\Delta(i v) = - i \Delta(v)$.\smallskip

We shall first show that $\Delta(i w) = i \Delta(w)$ (respectively, $\Delta(i w) = - i \Delta(w)$) for all $w\in \mathcal{U}_{min}(C_{i})$ with $TTP(w,e) \neq 0$ (respectively, $TTP(w,v) \neq 0$). Namely, for each $w\in \mathcal{U}_{min}(C_{i})$ we know that $\Delta (i w )\in \{\pm i \Delta(w)\}$ (cf. \eqref{eq Delta i times minimal}). If $\Delta (i w ) =-i \Delta (w )$ (respectively, $\Delta (i w ) =i \Delta (w )$) it follows from the first part of $({\color{red}14.a})$ (respectively, from the first part of $({\color{red}14.b})$) that 
$$\begin{aligned}
-i TTP(w , e) &= TTP(-i \Delta (w ), \Delta (e)) = TTP(\Delta (i w ), \Delta (e))\\
& = TTP(i w , e) = i TTP(w, e),
\end{aligned}$$ (respectively,  
$$\begin{aligned}
i \ \overline{TTP(w , v)}& = TTP(i \Delta (w ), \Delta (v)) = TTP(\Delta (i w ), \Delta (v)) \\ 
&= \overline{TTP(i w , e)} = - i \overline{TTP(w, e)} \Big)
\end{aligned}$$ which forces to the condition $TTP(w,e) =0$ (respectively, $TTP(w,v) =0$).\smallskip

We deduce from the above paragraphs and the assumptions on $e$ and $v$ that $TTP(v,e) =0 = \overline{TTP(e,v)}.$ Combining this information with the result describing the relative position of two minimal tripotents in \cite[Lemma 3.10]{FerPe18Adv} (as employed in many cases before), we can assume the existence of minimal tripotents $v_2,v_3,v_4$ in $C_{i}$ and complex numbers $\beta$ and $\delta$ such that $(e,v_2,v_3,v_4)$ is a quadrangle, $| \beta|^2 + |\delta|^2 =1$, and $v =  \beta v_2 + \delta v_3$. The element $u = \frac{1}{\sqrt{2}} e +  \frac{1}{\sqrt{2}} v_2$ is a minimal tripotent in $C_{i}$ with $TTP(u,e) =  \frac{1}{\sqrt{2}} \neq 0$ and $TTP (u, v) = \frac{\overline{\beta}}{\sqrt{2}}$. The previous conclusion shows that $\beta =0,$ and hence $v =  \delta v_3$, for a unitary $\delta\in \mathbb{T}$. In such a case $ \tilde{u}= \frac12 (e + v_2 + v_3+ v_4)$ is a minimal tripotent such that $TTP(\tilde{u},e) = \frac12\neq 0$ and $TTP(\tilde{u}, v) = \frac{\overline{\delta}}{2}\neq 0,$ which is impossible. This concludes the proof of $({\color{red}14.a})$ and $({\color{red}14.b})$. \smallskip 

Let us define $\Lambda_{1,0} := \Big\{ i \in \Lambda_1 : C_i \hbox{ has rank-one}\Big\},$
$$\Lambda_{1,l} := \Big\{ i \in \Lambda_1 : C_i \hbox{ has rank } \geq 2 \hbox{ and } \exists e\in \mathcal{U}_{min} (C_i) \hbox{ with } \Delta (i e) = i \Delta(e)\Big\},$$
and $$\Lambda_{1,c} := \Big\{ i \in \Lambda :  C_i \hbox{ has rank } \geq 2 \hbox{ and } \exists e\in \mathcal{U}_{min} (C_i) \hbox{ with } \Delta (i e) =- i \Delta(e)\Big\}.$$ We deduce from $({\color{red}14.a})$ and $({\color{red}14.b})$ that $\Delta(i e) = i \Delta(e)$ for every $i \in \Lambda_{1,l},$ and every $e\in \mathcal{U}_{min} (C_i)$ and $\Delta(ie) = - i \Delta(e)$ for every $i \in \Lambda_{1,c},$ and each $e\in \mathcal{U}_{min} (C_i).$\smallskip

For each $i \in \Lambda_{1,0}$ there exists a conjugation $j_i$ on  $D_{\sigma(i)},$ a real linear surjective isometry $T_i: C_{i}\to D_{\sigma(i)}$ such that $T_i (e) = \Delta (e)$ for all $e\in \mathcal{U}_{min}(C_{i}) = S_{C_{i}}$ and $j_i \circ T_i : C_{i}\to D_{\sigma(i)}$ is an isometric (complex linear) triple isomorphism (cf. Remark \ref{r surjective real linear isometries}). For $i\in \Lambda_{1,c}$ we can find a conjugation $j_i$ on $D_{\sigma(i)}$ (the existence is guaranteed by \cite[Theorem 4.1]{Ka97}). For $i\in \Lambda_{1,l}$ we set $j_i = Id_{D_{\sigma(i)}}$. Define a real linear mapping $J: N = \bigoplus_{j\in \Lambda_2}^{\ell_{\infty}} {D}_j \to N = \bigoplus_{j\in \Lambda_2}^{\ell_{\infty}} {D}_j,$ by $J((y_{\sigma(i)})_{i\in \Lambda_1}) := (j_i (y_{\sigma(i)}))_{i\in \Lambda_1}.$ The mapping $J$ is a real linear surjective isometry, and by construction, for $i \in \Lambda_{1,0}\cup \Lambda_{1,c}$ and $e,v\in \mathcal{U}_{min} (C_i)$ we have $J \Delta(i e) = j_i \Delta(i e) = i j_i \Delta( e) = i J\Delta(e),$ and thus $$TTP(J \Delta(e), J\Delta(v)) = TTP(j_i \Delta(e), j_i \Delta(v))= TTP(e, v).$$ Clearly, $TTP(J \Delta(e), J\Delta(v)) = TTP(e, v),$ for all $i \in \Lambda_{1,l}$ and $e,v\in \mathcal{U}_{min} (C_i)$, and hence the same conclusion holds for all $e,v\in \mathcal{U}_{min} (M)$ by orthogonality. We have therefore shown that the mapping $J\Delta: \mathcal{U}_{min}(M)\to \mathcal{U}_{min}(N)$ is a surjective isometry preserving triple transition pseudo-probabilities. Corollary \ref{c bijections preserving triple transition pseudo-probabilities and orthogonality} asserts that $J\Delta$ extends {\rm(}uniquely{\rm)} to a surjective complex-linear {\rm(}isometric{\rm)} triple isomorphism $\Phi$ from $M$ onto $N$. Finally, the mapping $J^{-1} \Phi : M\to N$ is a surjective real linear isometry whose restriction to $\mathcal{U}_{min} (M)$ is $\Delta$.
\end{proof}

\smallskip\smallskip

\textbf{Acknowledgements} Author partially supported by Project PID2021-122126NB-C31, financed by: ERDF / Ministry of Science and Innovation  - State Research Agency, Junta de Andaluc\'{\i}a grants FQM375 and PY20$\underline{\ }$00255, and by the IMAG--Mar{\'i}a de Maeztu grant CEX2020-001105-M/AEI/10.13039/ 501100011033.

\end{document}